\documentclass{article}
\usepackage{graphicx} 
\usepackage{amsmath}
\usepackage{amsfonts}
\usepackage{amsthm}
\usepackage{placeins}
\usepackage{caption}
\usepackage{hyperref}
\usepackage{enumitem}
\usepackage{mathabx}
\usepackage{geometry}

\hypersetup{
    colorlinks=true,
    linkcolor=blue,
    filecolor=magenta,      
    urlcolor=cyan,
    pdftitle={Overleaf Example},
    pdfpagemode=FullScreen,
    }

\usepackage{tikz}
\usetikzlibrary{matrix}
\usetikzlibrary{positioning}
\usetikzlibrary {patterns,patterns.meta}
\usetikzlibrary{shapes.geometric}

\pgfdeclarepatternformonly{south east lines}{\pgfqpoint{-0pt}{-0pt}}{\pgfqpoint{8pt}{8pt}}{\pgfqpoint{8pt}{8pt}}{
    \pgfsetlinewidth{0.4pt}
    \pgfpathmoveto{\pgfqpoint{0pt}{8pt}}
    \pgfpathlineto{\pgfqpoint{8pt}{0pt}}
    \pgfpathmoveto{\pgfqpoint{.2pt}{-.2pt}}
    \pgfpathlineto{\pgfqpoint{-.2pt}{.2pt}}
    \pgfpathmoveto{\pgfqpoint{8.2pt}{7.8pt}}
    \pgfpathlineto{\pgfqpoint{7.8pt}{8.2pt}}
    \pgfusepath{stroke}}

\usepackage{titlesec}

\setlength{\parindent}{0pt}
\setlength{\parskip}{1.25ex}

\title{Tree and Tripod Nim}
\author{Aidan Hennessey}
\date{September 2023}

\theoremstyle{definition}

\newtheorem{lemma}{Lemma}
\newtheorem{conjecture}{Conjecture}
\newtheorem{theorem}[lemma]{Theorem}
\newtheorem{cor}{Corollary}

\newtheorem{prop}{Proposition}[section]
\newtheorem{remark}{Remark}[section]

\newtheorem{defi}{Definition}[section]
\newtheorem{example}{Example}[section]

\begin{document}

\maketitle

\begin{abstract}
    This paper introduces a variant of the impartial combinatorial game nim, called tree nim, as well as a particular case of tree nim called tripod nim. A certain existence-uniqueness result and a periodicity result are proven about the distribution of $\mathcal{P}$ positions and Grundy values in tree nim. Tripod nim is associated to a family of arrays similar to those which arise in the study of sequential compound games. Using these arrays, a partial analysis is given for tripod nim. Conjectures relating to the periods of the rows of these arrays are put forward.
\end{abstract}

\section*{Introduction}

The field of impartial combinatorial game theory was arguably founded in 1902 with the publishing of the complete analysis of nim.\cite{first-nim} Since then, the field has grown and diversified, yet the game of nim and its variants have remained a central topic of study. One such variant, end nim, is solved in \cite{end-nim-1} with a reformulation of the proof, and further partial Grundy-number results given in \cite{end-nim-2}. According to \cite{unsolved-problems}, Frankel propsed a generalization called Hub-and-Spoke Nim, which Albert notes can be further generalized to a game called forest nim. \cite{end-nim-1} mentions this game and its lack of a solution. Forest nim is a sum of tree nim games. While full analysis remains an open problem, I present some general results about tree nim as well as stronger partial results for tripod nim, the simplest unsolved case of tree nim. In the discussion of tripod nim, a family of arrays arise which are very similar to (but not exactly the same as) the family $\mathcal{A}_*$ studied by Abrams and Cowen-Morton in \cite{alg-arrays}, \cite{colorful-arrays}, and \cite{locator-theorem}.

Section 2 introduces tree nim and builds up to the $\mathcal{P}$-completion lemma, a result which forms the backbone of the rest of the paper. Section 3 introduces the idea of a barrier, which is used to relate different tree nim games. The section concludes with an application of barriers to analyze mis\`ere-nim. Section 4 uses the $\mathcal{P}$-completion lemma to introduce a way one can derive a sequence from a family of tree nim positions. It is then proved that such a sequence is always additively periodic. This is followed by a brief discussion of forest nim and Grundy values. Like games discussed in \cite{fsm-periodicity} and \cite{additive-periodicity}, tree nim's grundy values collect in additively periodic ways. Section 6 introduces tripod nim and explores how the material in sections 3 and 4 specializes to it. Section 7 presents partial results for tripod nim. In section 8, I investigate the periods of the sequences relating to tripod nim, and point out patterns. In section 9, I relate the observed patterns to conjectures about a family of symbolic dynamical systems.

In terms of necessary background, the paper should be largely accessible to a general mathematical audience. The one exception to this is section 6, which discusses Grundy-numbers. Section 6 is not used in anything after, so those unfamiliar with Grundy-numbers can safely ignore it. 

\section{Background}

\subsection{What is nim?}

\par Nim is an impartial combinatorial game played with stacks of coins. Each turn, the player to move player chooses a stack, and then removes some positive number of coins from it (which could possibly be the whole stack). The game ends when the final coin is removed, and the player who makes that final move wins. Below is a example of how a game of nim might go:

\begin{figure}[!h]
    \centering
    \includegraphics[scale=0.42]{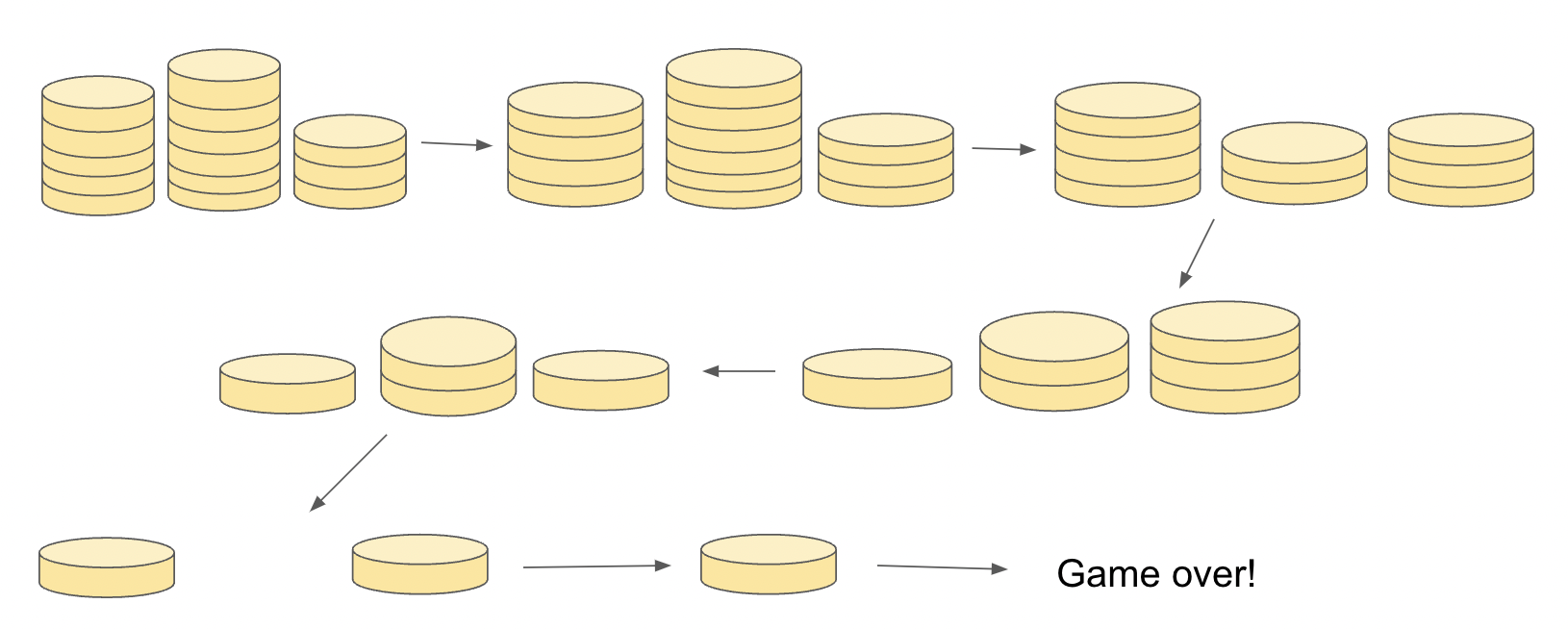}
    \caption{An example nim game}
    \label{nim_game}
\end{figure}

\par Like any impartial combinatorial game, nim's positions can all be classified as one of the following:

\begin{itemize}
    \item a $\mathcal{P}$ position, where the player who $\mathcal{P}$reviously moved can win by force
    \item an $\mathcal{N}$ position, where the player who moves $\mathcal{N}$ext has a winning strategy
\end{itemize}

The classification of an impartial combinatorial game position as $\mathcal{P}$ or $\mathcal{N}$ can be computed by applying the following recursive criteria:

\begin{itemize}
    \item If there is a legal move from position $A$ to some $\mathcal{P}$-position, $A$ is an $\mathcal{N}$-position. Otherwise it is a $\mathcal{P}$-position.
    \item In particular, a position from which there are no legal moves is $\mathcal{P}$.
\end{itemize}

In general, a winning strategy in an impartial combinatorial game is to always move to $\mathcal{P}$ positions. Your opponent must then move to an $\mathcal{N}$ position, allowing you a $\mathcal{P}$ response. In nim, the $\mathcal{P}$ positions follow a very nice pattern.

\subsection{How do you win?}

\par Nim has a complete analysis, first published in \cite{first-nim}. A nim position is $\mathcal{P}$ if and only if the \emph{nim-sum} of its stack sizes is 0. The nim-sum (denoted $\oplus$), also called 2-adic sum, of two natural numbers can be found by writing both numbers in binary and taking their bitwise XOR, or equivalently by adding in base 2 without carrying.
\par For example, $11 \oplus 13 = 1011 \oplus 1101 = 0110 = 6$. It's easy to check that this operation is associative and commutative, and that $a \oplus b = 0 \iff a=b$. Table 1 displays the nim-sums of the numbers 0 to 15.

\begin{table}[h]
    \centering
    \begin{tabular}{c|c c c c c c c c c c c c c c c c}
        $\oplus$ &0&1&2&3&4&5&6&7&8&9&10&11&12&13&14&15\\
        \hline
        0&0&1&2&3&4&5&6&7&8&9&10&11&12&13&14&15\\
        1&1&0&3&2&5&4&7&6&9&8&11&10&13&12&15&14\\
        2&2&3&0&1&6&7&4&5&10&11&8&9&14&15&12&13\\
        3&3&2&1&0&7&6&5&4&11&10&9&8&15&14&13&12\\
        4&4&5&6&7&0&1&2&3&12&13&14&15&8&9&10&11\\
        5&5&4&7&6&1&0&3&2&13&12&15&14&9&8&11&10\\
        6&6&7&4&5&2&3&0&1&14&15&12&13&10&11&8&9\\
        7&7&6&5&4&3&2&1&0&15&14&13&12&11&10&9&8\\
        8&8&9&10&11&12&13&14&15&0&1&2&3&4&5&6&7\\
        9&9&8&11&10&13&12&15&14&1&0&3&2&5&4&7&6\\
        10&10&11&8&9&14&15&12&13&2&3&0&1&6&7&4&5\\
        11&11&10&9&8&15&14&13&12&3&2&1&0&7&6&5&4\\
        12&12&13&14&15&8&9&10&11&4&5&6&7&0&1&2&3\\
        13&13&12&15&14&9&8&11&10&5&4&7&6&1&0&3&2\\
        14&14&15&12&13&10&11&8&9&6&7&4&5&2&3&0&1\\
        15&15&14&13&12&11&10&9&8&7&6&5&4&3&2&1&0\\
    \end{tabular}
    \caption{Nim-Addition table}
    \label{tab:nim_addition}
\end{table}

\par To prove this analysis, it suffices to show that (1) from every position with nim-sum 0, any subsequent move has non-0 nim-sum and (2) from any position with non-zero nim-sum, there is a move to a position with nim-sum 0.

\begin{prop}
    There are no moves between positions with nim-sum 0. 
\end{prop}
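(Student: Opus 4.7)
The plan is to use the characterization of a nim move together with the cancellation property $a \oplus b = 0 \iff a = b$, which is already stated in the excerpt. First I would fix notation: let the starting position have stacks of sizes $a_1, a_2, \ldots, a_n$ with $a_1 \oplus a_2 \oplus \cdots \oplus a_n = 0$. A legal move selects some index $i$ and replaces $a_i$ with a strictly smaller nonnegative integer $a_i'$, leaving the other stacks unchanged.

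Next I would compute the nim-sum of the resulting position and compare it to the original. The new nim-sum is
\[
S' = a_1 \oplus \cdots \oplus a_{i-1} \oplus a_i' \oplus a_{i+1} \oplus \cdots \oplus a_n.
\]
Adding (via $\oplus$) the original equation $a_1 \oplus \cdots \oplus a_n = 0$ to $S'$ and using associativity, commutativity, and the fact that $x \oplus x = 0$ for every $x$, everything except the $i$-th slot cancels, giving $S' = a_i \oplus a_i'$.

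Finally I would invoke the stated property $a \oplus b = 0 \iff a = b$: if $S'$ were $0$, then $a_i = a_i'$, contradicting that a legal nim move strictly decreases the chosen stack. Hence $S' \neq 0$, which is exactly what is to be shown. There is no serious obstacle here; the only subtlety worth flagging in the write-up is that the argument relies on the two algebraic facts about $\oplus$ (self-cancellation and the biconditional $a \oplus b = 0 \iff a = b$) already recorded in the preceding paragraph, so no new machinery is needed.
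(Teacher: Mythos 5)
Your proof is correct and follows essentially the same route as the paper: both reduce the claim, via XOR cancellation, to the identity $S' = a_i \oplus a_i'$ and then invoke $a \oplus b = 0 \iff a = b$. The only cosmetic difference is that the paper frames it as showing two nim-sum-zero positions cannot differ in exactly one stack, while you frame it as showing a legal move from a nim-sum-zero position yields a nonzero nim-sum; the underlying algebra is identical.
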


\begin{proof}

All moves in nim change the size of exactly one stack. Consider two positions which differ in the size of at most one stack, so positions with stack sizes $n_1, n_2, n_3, ... n_k$ and $n_1', n_2, n_3, ..., n_k$, such that 

\begin{equation*}
    n_1 \oplus n_2 \oplus ... \oplus n_k = n_1' \oplus n_2 \oplus ... \oplus n_k = 0.
\end{equation*}

Now consider 
\begin{equation*}
    \begin{split}
         0 & = 0 \oplus 0 \\
         & = (n_1 \oplus n_2 \oplus ... \oplus n_k) \oplus (n_1' \oplus n_2 \oplus ... \oplus n_k) \\
         & = (n_1 \oplus n_1') \oplus (n_2 \oplus n_2) \oplus ... \oplus (n_k \oplus n_k) \\
         & = (n_1 \oplus n_1') \oplus 0 \oplus 0 \oplus ... \oplus 0 \\
         & = n_1 \oplus n_1'
    \end{split}
\end{equation*}

As noted earlier, we may conclude that $n_1=n_1'$. Therefore, no two games with nim-sum 0 may differ in exactly one stack, and so there are no moves between positions with nim-sum 0.
    
\end{proof}

\begin{defi}[to see]
    Position $A$ \textit{sees} position $B$ if it is legal to move from $A$ to $B$. Denote this relation $A \rightarrow B$.
\end{defi}

\begin{prop}
    Every position with non-zero nim-sum sees a 0 nim-sum position.
\end{prop}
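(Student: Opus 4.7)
The plan is to show constructively how to find such a move. Let the stacks have sizes $n_1, n_2, \ldots, n_k$ with nim-sum $s = n_1 \oplus n_2 \oplus \cdots \oplus n_k \neq 0$. First I would locate the highest-order bit in which $s$ has a $1$; call this bit position $b$. Since $s$ has a $1$ in bit $b$, at least one of the $n_i$ must also have a $1$ in bit $b$, because XOR in that bit must have an odd number of ones among the summands. Pick such an $n_i$.

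Next I would consider the candidate new stack size $n_i' := n_i \oplus s$. The key observation is that $n_i' < n_i$: in bit $b$, $n_i$ has a $1$ and $s$ has a $1$, so $n_i'$ has a $0$ there; in bits higher than $b$, $s$ has $0$s (by choice of $b$), so $n_i'$ agrees with $n_i$. Hence $n_i$ and $n_i'$ agree above bit $b$, but $n_i$ has a $1$ at bit $b$ while $n_i'$ has a $0$ there, which forces $n_i' < n_i$ regardless of what happens below bit $b$. Therefore replacing stack $i$ by $n_i'$ is a legal move (removing a positive number of coins from a single stack).

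Finally I would verify that the resulting position has nim-sum $0$. Substituting, the new nim-sum is
\begin{equation*}
n_1 \oplus \cdots \oplus n_{i-1} \oplus n_i' \oplus n_{i+1} \oplus \cdots \oplus n_k = s \oplus n_i \oplus n_i' = s \oplus n_i \oplus (n_i \oplus s) = 0,
\end{equation*}
using associativity, commutativity, and self-cancellation of $\oplus$ noted earlier.

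I do not anticipate a real obstacle here; the only subtle point is justifying $n_i' < n_i$ carefully, which is why I would spell out the bit-by-bit comparison at bit $b$ rather than wave at it. Together with the previous proposition, this finishes the characterization of $\mathcal{P}$-positions in nim.
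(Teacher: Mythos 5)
Your proof is correct and takes essentially the same approach as the paper's: locate the highest-order bit set in $s$, pick a stack $n_i$ with a $1$ in that bit, move it to $n_i \oplus s$, and verify $n_i \oplus s < n_i$ and that the new nim-sum vanishes. The only difference is cosmetic phrasing (the paper speaks of the number of binary digits $d$ and the $2^{d-1}$'s place rather than ``highest-order bit'').
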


\begin{proof}
    Suppose the nim-sum $s$ of a position with stacks $n_1, n_2, ..., n_k$ has $d$ digits when written in binary. Then, it has a 1 in the $2^{d-1}$'s place. Thus, if we write all the stack sizes in binary, there is an odd (and hence, non-zero) number of stacks which have a 1 in the $2^{d-1}$'s place. Pick such a stack, say $n_i$, and consider $n_i \oplus s$. This number agrees with $n_i$ in all its binary digits to the left of the $2^{d-1}$'s place, but then has a 0, where $n_i$ has a 1. We can hence conclude $n_i \oplus s < n_i$. This means we can move from $n_i$ to $n_i \oplus s$. The following calculation shows that such a move results in a nim-sum 0 position, completing the proof.

    \begin{equation*}
        n_1 \oplus ... \oplus (n_i \oplus s) \oplus ... n_k = (n_1 \oplus ... \oplus n_k) \oplus s = s \oplus s = 0
    \end{equation*}
\end{proof}

\subsection{Mis\`ere Nim}

When a combinatorial game is defined such that the player who makes the last move wins, it is said the game is in normal play. If this rule is flipped so that the player who makes the last move \textit{loses}, then the game is called a \textit{mis\`ere game}. Mis\`ere nim is, as the name suggests, the mis\`ere form of nim. The allowed moves are the same, but in mis\`ere nim the player who takes the last coin loses. When every stack has size 1, there are no meaningful choices to be made, and so every position is either $\mathcal{P}$ in normal play and $\mathcal{N}$ in mis\`ere play or vice-versa. Amazingly, however, as long as there is one stack with multiple coins, a mis\`ere nim position is $\mathcal{P}$ if and only if it is $\mathcal{P}$ under normal play. This result is far from new, but section 10 provides a reformulation of the proof using machinery developed in this paper.

\section{Tree Nim and the $\mathcal{P}$-completion Lemma}

Suppose that instead of treating every stack equally, we instead construct a tree, with a stack at each vertex. Players may only take from stacks at leaves, with inner stacks becoming playable once outer ones are exhausted. A tree nim position can be understood to be a tree with natural number labels on the vertices, referred to as the sizes $|v_i|$ of the vertices $v_i$. 

\FloatBarrier
\begin{figure}[!h]
    \centering

    \begin{tikzpicture}
        \node[rectangle, color=red, text=black, thick, draw] (1) at (0,0) {1};
        \node[above=of 1, rectangle, color=red, text=black, thick, draw] (red4) {4};
        \node[rectangle, color=red, text=black, thick, draw] (red8) at (-1.5, 0.4) {8};
        \node[above=of red8, circle, color=green, text=black, thick, draw] (green4) {4};
        \node[circle, color=green, text=black, thick, draw] (green8) at (1.5, 0.4) {8};
        \node[above=of green8, circle, color=green, text=black, thick, draw] (6) {6};
        \node[circle, color=green, text=black, thick, draw] (3) at (1, -1) {3};
        \node[left=of 3, circle, color=green, text=black, thick, draw] (2) {2};
        \node[circle, color=green, text=black, thick, draw] (5) at (-2, -0.8) {5};

        \draw (green4)--(red4);
        \draw (red4)--(6);
        \draw (red4)--(1);
        \draw (green8)--(1);
        \draw (red8)--(1);
        \draw (red8)--(5);
        \draw (1)--(2);
        \draw (1)--(3);
    \end{tikzpicture}
    
    \caption{An example tree nim position. Vertices circled in green are in play. Vertices boxed in red are not yet in play.}
    \label{tree_nim}
\end{figure}
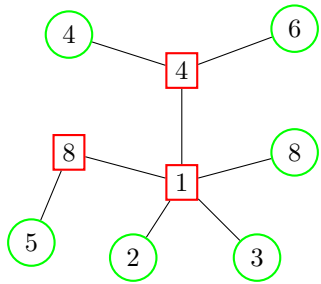
\FloatBarrier

\begin{defi}[Ray]
    Pick a tree nim position $r_0$ and a vertex $v \in V(r_0)$. A ray is the set $r_0\#_v l = \{r_n: n \in \mathbb{N}_0\}$,\footnote{Throughout this paper, $\mathbb{N}$ refers to the strictly positive integers, while $\mathbb{N}_0$ refers to the non-negative integers.} where $r_n$ for $n > 0$ is formed by adjoining a leaf $l$ of size $n$ to $r_0$ at $v$. $l$ is called the \textit{variable leaf}.
\end{defi}

\begin{example}
    Start with the example tree nim position in Figure 2. Remove the leaf with size 3. Let this be $r_0$. Call the vertex with size 1 $v$. The position $r_n$ (for $n>0$) belonging to the ray $r_0\#_v l$ would then look like the following:
    
    \centering
    \begin{tikzpicture}
        \node (1) at (0,0) {1};
        \node[above=of 1] (red4) {4};
        \node (red8) at (-1.5, 0.4) {8};
        \node[above=of red8] (green4) {4};
        \node (green8) at (1.5, 0.4) {8};
        \node[above=of green8] (6) {6};
        \node (n) at (1, -1) {$n$};
        \node[left=of n] (2) {2};
        \node (5) at (-2, -0.8) {5};

        \draw (green4)--(red4);
        \draw (red4)--(6);
        \draw (red4)--(1);
        \draw (green8)--(1);
        \draw (red8)--(1);
        \draw (red8)--(5);
        \draw (1)--(2);
        \draw (1)--(n);
    \end{tikzpicture}
\end{example}

In particular, the position in figure 2 appears in the ray as $r_3$.

\begin{defi}[to see, as a ray]
    Let $R=\{r_n\}$ and $R'=\{r'_n\}$ be rays. $R$ sees $R'$ (denoted $R \rightarrow R'$) if $\forall n \in \mathbb{N}_0$, $r_n \rightarrow r'_n$.
\end{defi}

\begin{remark}
Every ray $R=\{r_n: n \in \mathbb{N}_0\}$ satisfies $r_n \rightarrow r_m \iff n > m$. Also, $r_0$ has one fewer vertices than $r_n$ for all $n>0$. There is an analogous relationship between positions and rays in tree nim and points and lines in euclidean space. This is formalized in propositions 4.1 and 4.2.
\end{remark}

\begin{prop}
    For any tree nim positions $p$ and $q$ where $p \rightarrow q$, there is a unique ray containing $p$ and $q$.
\end{prop}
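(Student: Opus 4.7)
The plan is to split the argument by cases on whether the move $p\to q$ deletes a leaf (reducing its size to $0$) or merely shrinks it to a positive value. In both cases, I first construct a ray through $p$ and $q$, then use the remark to pin down uniqueness.

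For existence, suppose first the move only decreases the size of a leaf $l$ of $p$, so that $p$ and $q$ share the same underlying tree and disagree only at $l$, with $|l|_p > |l|_q > 0$. Let $v$ be the neighbor of $l$ in $p$ and let $r_0$ be $p$ with $l$ deleted. Then the ray $r_0 \#_v l$ contains $p = r_{|l|_p}$ and $q = r_{|l|_q}$. If instead the move removes the leaf $l$ entirely (i.e.\ $|l|_q = 0$), then $q$ is literally $p$ with $l$ deleted; taking $r_0 := q$ and $v$ the former neighbor of $l$ in $p$ gives a ray with $p = r_{|l|_p}$ and $q = r_0$.

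For uniqueness, I invoke the remark that in any ray one has $r_n \to r_m$ iff $n > m$. So if some ray $r_0 \#_v l$ contains both $p$ and $q$, then $p = r_i$ and $q = r_j$ for some $i > j \geq 0$. When $j > 0$, all $r_k$ with $k \geq 1$ carry the same vertex set with identical sizes except at the variable leaf $l$; since the move from $p$ to $q$ changes exactly one vertex, that vertex must be $l$, whereupon $v$ (its neighbor) and $r_0$ ($p$ with $l$ removed) are determined. When $j = 0$, $r_0 = q$ has one fewer vertex than $p = r_i$, forcing $l$ to be the unique vertex in $V(p) \setminus V(q)$, and again $v$ and $r_0$ are determined. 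Hence the triple $(r_0, v, l)$, and so the ray, is uniquely fixed by the pair $(p,q)$.

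The main thing to be careful about is the case split: a single tree nim move alters $p$ in exactly one of two very restricted ways, and the shape of a ray (containing either two full-sized positions or one full-sized position together with $r_0$) matches those two possibilities precisely. Once that correspondence is made explicit, both existence and uniqueness follow by direct reading-off.
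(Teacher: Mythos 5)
Your proof is correct and follows essentially the same route as the paper's: the same case split on whether the move deletes the leaf or merely shrinks it, the same construction $(p\backslash l)\#_v l$ for existence, and the same appeal to the remark that $r_n \rightarrow r_m \iff n > m$ for uniqueness. No issues.
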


\begin{proof}
Suppose $R=\{r_n\}$ is a ray containing $p$ and $q$. 

If the move from $p$ to $q$ is the complete removal of a leaf $l$ based at $v$, then $q$ must be $r_0$, by remark 4.1. Additionally, the variable leaf must be the one removed during the move from $p$ to $q$, or else $r_n$ would have a different tree structure than $p$ for all $n$, and hence be a different tree nim position. $R=r_0 \#_v l$ is thus the only possibility, giving usiqueness. It works, as letting $s = |l|$, we see $p=r_s$. Thus in this case we also have existence.

If the move from $p$ to $q$ is not the complete removal of a leaf but rather a decrease in the size of some leaf $l$ from $n$ to $m$, then neither $p$ nor $q$ is $r_0$. Because all $p_k$ are the same except for the variable leaf, the variable leaf must be $l$. Letting $v$ be $l$'s neighbor vertex,  it follows that $(p \backslash l) \#_v l$ is the only possible satisfactory $R$, giving uniqueness. Indeed, it is a ray containing $p$ and $q$, as $p=r_n$ and $q=r_m$, so we have existence as well.
\end{proof}

\begin{prop}
    Given a ray $R=r_0 \#_v l$ and tree nim position $p \notin R$ such that $r_n \rightarrow p$ for some $n>0$, there exists a unique ray $R'$ such that $R \rightarrow R'$ and $p \in R'$. Additionally, $p=r'_n$.
\end{prop}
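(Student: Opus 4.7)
The plan is to construct $R'$ explicitly and then show that any other candidate ray must coincide with it. Let $w$ be the leaf of $r_n$ affected by the move $r_n \rightarrow p$. Since reducing or removing the variable leaf $l$ of $R$ always lands us back in $R$, we must have $w \neq l$. Then $w$ is also a leaf of $r_0$ with the same size and neighbors, since the only change from $r_0$ to $r_n$ is the addition of $l$ at $v$, and $l$ is not adjacent to $w$. Consequently $w$ is a leaf of every $r_m$, and we may apply the same operation---removal of $w$, or reduction of $w$ to the same target size---to each $r_m$ to produce a position $r'_m$. For $m > 0$, this $r'_m$ is precisely $r'_0$ with $l$ of size $m$ adjoined at $v$, so the family $R' := r'_0 \#_v l$ is a ray; by construction $R \rightarrow R'$ and $r'_n = p$, giving existence together with the additional ``$p = r'_n$'' claim.

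For uniqueness, let $R'' = s_0 \#_{v''} l''$ be any ray satisfying the hypotheses, and write $p = s_k$. The first step is to show $k = n$. From $R \rightarrow R''$ we have $r_k \rightarrow p$, while $r_n \rightarrow p$ is given. A vertex-count comparison against the two possible move types shows that both moves must be of the same type (both removals or both reductions); the case $k = 0$ is ruled out separately because it forces a specific value of $|V(p)|$ that cannot be simultaneously compatible with both moves. For $k, n \geq 1$ in the removal case, $V(p) = V(r_n) \setminus \{w_n\} = V(r_k) \setminus \{w_k\}$ forces $w_n = w_k$; reading off the size of $l$ in $p$ from each description then gives $n = k$. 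For the reduction case, if the reduced leaf is $l$ then $p$ coincides with some $r_t \in R$, contradicting $p \notin R$; otherwise the reduced leaf is common to both $r_n$ and $r_k$, and again comparing the size of $l$ in $p$ yields $n = k$.

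Once $k = n$, the move $r_n \rightarrow s_n$ coincides with the given $r_n \rightarrow p$, so it affects the same vertex $w$. To determine $s_m$ for each $m > 0$, I use the ray structure of $R''$: the leaves of $s_m$ other than the variable leaf $l''$ must have sizes constant in $m$, while $r_m \rightarrow s_m$ must change only a single leaf of $r_m$. A similar analysis to the one above (separately handling whether $r_m \rightarrow s_m$ affects $l$ or not) shows that the move must affect $w$ in the same way for every $m$, which pins down $l'' = l$, $v'' = v$, and $s_m = r'_m$. The case $m = 0$ follows analogously, so $R'' = R'$.

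The main obstacle is the case analysis in proving $k = n$; once the index is pinned down, the rigidity of the ray structure does the rest. The most delicate part is carefully ruling out the possibility that the move from $r_m$ to $s_m$ affects the variable leaf $l$ of $R$, since this would force an interaction between the two ``variable'' coordinates $l$ and $l''$ that has to be excluded by appeal to the hypothesis $p \notin R$.
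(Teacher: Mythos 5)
Your route is genuinely different from the paper's. The paper first establishes that any admissible $R'$ must satisfy $p = r'_n$ (index uniqueness), then builds an auxiliary position $q$ from $r_{n+1}$ by performing the same decrease on $w$, and invokes Proposition 4.1 (the existence-uniqueness of a ray through a pair $q \rightarrow p$) to get both existence and uniqueness of $R'$ in one stroke. You instead construct $R'$ directly by applying the same move at $w$ to every $r_m$, and you prove uniqueness by hand, first pinning down the index $k$ and then arguing that the ray structure forces $s_m = r'_m$ for all $m$. The direct construction is perfectly fine and arguably more concrete; what you lose is the economy of reusing Proposition 4.1, which is why your uniqueness argument is noticeably longer and vaguer (``a similar analysis to the one above\ldots'' in the final paragraph is a real argument left to the reader, and it is worth actually doing, since the subtle case is exactly the one you flag: the move $r_m \rightarrow s_m$ hitting $l$).

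There is one concrete error. You claim that the case $k = 0$ ``is ruled out separately because it forces a specific value of $|V(p)|$ that cannot be simultaneously compatible with both moves.'' This is not true at the level of vertex counts: if $r_n \rightarrow p$ is a \emph{removal} of the leaf $w$, then $|V(p)| = |V(r_n)| - 1 = |V(r_0)|$, which is exactly the count a \emph{reduction} from $r_0$ would produce. The count argument therefore does not exclude this mixed sub-case. What does exclude it is the vertex \emph{sets}: $V(p) = V(r_n) \setminus \{w\}$ while $V(r_0) = V(r_n) \setminus \{l\}$, and $w \neq l$, so $p$ and $r_0$ have different underlying trees and $r_0 \not\rightarrow p$. (This is essentially the paper's argument: $r_k$ differs from $p$ at both $l$ and $w$, hence cannot see $p$.) Patch the $k = 0$ justification to compare vertex sets rather than counts, and spell out the final ``similar analysis,'' and the proof goes through.
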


\begin{proof}

I first show that for any satisfactory $R' = \{r'_n\}_{n=0}^\infty$, that $p=r'_n$. To see this, suppose otherwise, i.e. that $p=r'_k$ for $k \neq n$. Then, we would have $r_k \rightarrow p$. Let $w$ be the vertex at which $r_n$ and $p$ differ. $r_k$ differs from $p$ at both $l$ and $w$, so $r_k$ cannot possibly see $p$, which yields a contradiction.

Let $q$ be the position which is obtained from $r_{n+1}$ by decreasing the size of $w$ to match $|w|_p$. $r_{n+1} \rightarrow q \rightarrow p$. By prop 4.1, there is a unique ray $R'$ containing $p$ and $q$. It is easy to see that $R \rightarrow R'$, giving existence. 

For any ray $R'' \leftarrow R$ with $r''_n=p$, we would need to have $r_{n+1} \rightarrow r''_{n+1} \rightarrow p$. $r_n$ and $q$ are the only positions satisfying this condition on $r''_n$, and $r_n$ clearly won't work, so $r''_n=q$. $R''$ then contains both $p$ and $q$. Thus, by prop 2.1, $R''=R'$, giving uniqueness.

    
\end{proof}

\begin{defi}[Leaf Sum]
    The \textit{leaf sum} $|R|$ of a ray $R = r_0 \#_v l_0$ is the sum $\sum_{i\neq 0} |l_i|$ over leaves $l_i$ of $r_1$, excluding $l_0$. Because $l_0$ is excluded from the calculation, any $r_n$ for $n>0$ could replace $r_1$ and yield an equivalent definition.
\end{defi}

\begin{example}
    $R$ and $R'$ are tree nim rays with $R \rightarrow R'$, and $r_n$ and $r'_n$ are positions in the respective rays. Below are $r_n$, on the left, and $r'_n$, on the right. $|R|=25$ and $|R'|=22$.

    \centering
    \begin{tikzpicture}
        \node (1) at (0,0) {1};
        \node[above=of 1] (red4) {4};
        \node (red8) at (-1.5, 0.4) {8};
        \node[above=of red8] (green4) {4};
        \node (green8) at (1.5, 0.4) {8};
        \node[above=of green8] (6) {6};
        \node (n) at (1, -1) {$n$};
        \node[left=of n] (2) {2};
        \node (5) at (-2, -0.8) {5};

        \draw (green4)--(red4);
        \draw (red4)--(6);
        \draw (red4)--(1);
        \draw (green8)--(1);
        \draw (red8)--(1);
        \draw (red8)--(5);
        \draw (1)--(2);
        \draw (1)--(n);

        \node (1_) at (6,0) {1};
        \node[above=of 1_] (red4_) {4};
        \node (red8_) at (4.5, 0.4) {8};
        \node[above=of red8_] (green4_) {4};
        \node (green8_) at (7.5, 0.4) {5};
        \node[above=of green8_] (6_) {6};
        \node (n_) at (7, -1) {$n$};
        \node[left=of n_] (2_) {2};
        \node (5_) at (4, -0.8) {5};

        \draw (green4_)--(red4_);
        \draw (red4_)--(6_);
        \draw (red4_)--(1_);
        \draw (green8_)--(1_);
        \draw (red8_)--(1_);
        \draw (red8_)--(5_);
        \draw (1_)--(2_);
        \draw (1_)--(n_);
    \end{tikzpicture}
\end{example}

\begin{prop}
    Any ray $R$ sees exactly $|R|$ rays.
\end{prop}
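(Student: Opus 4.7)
The plan is to apply Proposition 4.2 with $n=1$ to set up a bijection between the rays $R'$ with $R \to R'$ and the moves $r_1 \to p$ where $p \notin R$, and then count the latter directly. The moves out of $r_1$ split into two classes. Moves that reduce the variable leaf $l_0$ account for exactly one option, namely $r_1 \to r_0$, since $|l_0|=1$. Every other move reduces some leaf $l \neq l_0$ of $r_1$ to a size in $\{0, 1, \ldots, |l|-1\}$, and the resulting position differs from $r_1$ at a vertex other than $l_0$, so it cannot be any $r_m \in R$. Summing over all leaves $l \neq l_0$ of $r_1$ yields $\sum_{l \neq l_0} |l| = |R|$ valid moves by the definition of leaf sum.

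Proposition 4.2 then assigns each such move $r_1 \to p$ a unique ray $R'$ with $R \to R'$ and $p = r'_1$, and distinct moves yield distinct values of $p = r'_1$, hence distinct rays. This gives injectivity. For surjectivity, given any ray $R'$ with $R \to R'$, I need to confirm that $r'_1 \notin R$ so that the ray is indeed hit by our count. Since $r_1 \to r'_1$ and $r_1 \to r_m$ only for $m < 1$, the only possibility to worry about is $r'_1 = r_0$. In this scenario the variable leaf $l'_0$ of $R'$ would be some size-$1$ leaf of $r_0$ attached at a vertex $v'$; but then comparing $r_2$ with $r'_2$ shows they differ at both $v$ (where $r_2$ has the added leaf $l_0$ of size $2$ while $r'_2$ does not) and at $v'$ (where $r'_2$ has $l'_0$ enlarged from $1$ to $2$ while $r_2$ retains the size-$1$ leaf from $r_0$). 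A single legal move changes exactly one vertex's size, contradicting $r_2 \to r'_2$.

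The main obstacle is this sub-case of the surjectivity argument: the counting step falls straight out of the definition of leaf sum, and Proposition 4.2 does the heavy lifting for injectivity, but ruling out $r'_1 = r_0$ requires unpacking what the ray $R'$ would have to look like and then doing the short position-comparison between $r_2$ and $r'_2$ described above. Once that case is closed, the bijection is established and the count is exactly $|R|$.
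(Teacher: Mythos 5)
Your proof is correct and follows essentially the same route as the paper: use Proposition 4.2 to biject the rays seen by $R$ with the moves from $r_1$ other than the move to $r_0$, then count those moves leaf by leaf to get $|R|$. The only difference is that you explicitly rule out the case $r'_1 = r_0$ for a seen ray $R'$ (via the comparison of $r_2$ with $r'_2$), a detail the paper's proof leaves implicit.
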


\begin{proof}
Let $R = \{r_n\}$. Each $R'$ such that $R \rightarrow R'$ has a distinct $r'_1$ such that $r_1 \rightarrow r'_1$. Additionally, any $p$ other than $r_0$ satisfying $r_q \rightarrow p$ is the $r'_1$ for some $R'$ seen by $R$, by the proposition 4.2. Thus, the number of seen rays is equal to the number of available moves from $r_1$, excluding the move to $r_0$. Each move reduces the size of some non-variable leaf $l_i$. For every leaf $l_i$, there are $|l_i|$ non-negative integer sizes it could be reduced to. Summing over all non-variable leaves gives $|R|$, by definition.
\end{proof}

\begin{defi}[$\mathcal{P}$-completion]
    A $\mathcal{P}$-completion of a ray $R=\{r_n\}$ is a value $k$ such that $r_k$ is a $\mathcal{P}$ position.
\end{defi}

\begin{lemma}[$\mathcal{P}$-Completion Lemma]
    Every ray $R = \{r_n\}$ has a unique $\mathcal{P}$-completion $\mathcal{P}(R)$. Furthermore, $\mathcal{P}(R) \leq |R| + 1$.
\end{lemma}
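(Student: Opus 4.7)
The plan is to dispatch uniqueness directly and then derive existence and the bound by a pigeonhole argument on the rays seen by $R$.

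Uniqueness is immediate: if $r_n$ and $r_m$ are both $\mathcal{P}$ positions of $R$ with $n > m$, then $r_n \to r_m$ (decrease the variable leaf from size $n$ to size $m$, or remove it entirely when $m = 0$), contradicting the fact that no $\mathcal{P}$ position sees another $\mathcal{P}$ position. Hence $R$ admits at most one $\mathcal{P}$-completion. This argument works verbatim for any ray and does not rely on existence.

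For existence and the bound, I argue by contradiction: suppose none of $r_0, r_1, \ldots, r_{|R|+1}$ is $\mathcal{P}$, so all of these are $\mathcal{N}$ positions. Fix any $n \in \{1, \ldots, |R|+1\}$. Because $r_n$ is $\mathcal{N}$ it has some move to a $\mathcal{P}$ position $p$. The move is either on the variable leaf, giving $p = r_m$ for some $0 \le m < n$, or on a non-variable leaf, giving $p \notin R$. The first case would make $r_m$ a $\mathcal{P}$ position with $m \le |R|$, contradicting our standing assumption. So the second case holds, and by Proposition 4.2 there is a unique seen ray $R'(n)$ of $R$ with $p = r'_n$ at index $n$ of $R'(n)$. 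This defines a map $n \mapsto R'(n)$ from $\{1, \ldots, |R|+1\}$ into the set of rays seen by $R$.

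I then invoke the uniqueness half of this lemma, applied to each seen ray $R'$: if $R'(n) = R'(m)$ for distinct $n, m$, then $r'_n$ and $r'_m$ are two distinct $\mathcal{P}$ positions in $R'(n)$, contradicting uniqueness for that ray. Hence $n \mapsto R'(n)$ is injective. But its domain has $|R|+1$ elements while its codomain has only $|R|$ elements by Proposition 4.3, so pigeonhole forces the desired contradiction, and we conclude that some $r_k$ with $k \le |R|+1$ is $\mathcal{P}$. The only genuinely creative step is the count: the bound $|R|+1$ is chosen to exceed the number of seen rays by exactly one, and Proposition 4.2 is essential because it pins the seen-ray move from $r_n$ to land at index $n$ of the target ray, so that two source indices colliding in a common target yield two distinct $\mathcal{P}$ positions there. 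No induction on ray structure is needed; the well-definedness of $\mathcal{P}/\mathcal{N}$ for finite impartial games is all the background required.
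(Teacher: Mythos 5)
Your proof is correct and follows essentially the same route as the paper's: uniqueness from the fact that $\mathcal{P}$ positions cannot see $\mathcal{P}$ positions, and existence by assuming $r_0,\dots,r_{|R|+1}$ are all $\mathcal{N}$, routing each witnessing $\mathcal{P}$ position into a seen ray via Proposition 4.2, and applying pigeonhole against the count of Proposition 4.3. Your explicit note that Proposition 4.2 places $p$ at index $n$ of the target ray (so a collision yields two distinct $\mathcal{P}$ positions there) makes the final contradiction slightly more airtight than the paper's phrasing, but the argument is the same.
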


\begin{proof}
Uniqueness: By way of contradiction, suppose a ray $R = \{r_k\}$ has two distinct completions $n$ and $m$, with $n>m$. Then, $p_n$ and $p_m$ would both be $\mathcal{P}$. $p_n \rightarrow p_m$, contradicting the fact that $\mathcal{P}$ positions cannot see $\mathcal{P}$ positions.

Existence: By way of contradiction, suppose $r_n$ is $\mathcal{N}$ for all $n \leq |R| + 1$. Then, $r_0, r_1, ..., r_{|R|+1}$ see corresponding $\mathcal{P}$ positions $p_0, p_1, ..., p_{|R|+1}$. Because no $r_n$ is $\mathcal{P}$, each of $p_0, p_1, ..., p_{|R|+1}$ are outside $R$. Thus, by proposition 4.2, each of $p_1, p_2, ..., p_{|R|+1}$ live in some ray $R_n$ seen by $R$. There are $|R|+1$ relevant $p_n$'s, but $R$ only sees $|R|$ rays by prop 4.3. Thus, by the pigeon hole principle, there exists an $R_n$ which contains at least two $\mathcal{P}$ positions. This contradicts the uniqueness of $\mathcal{P}$-completions.
\end{proof}

\section{Barriers and Shadows}

In this chapter I introduce a tool for showing equivalence between different tree nim games.

\begin{defi}[A tree's world]
    Fix a tree $T$, and fix sizes for its non-leaf nodes. Define the \textit{world} $W(T)$ of $T$ to be the set of tree nim positions with this fixed structure, along with the positions they see.
\end{defi}

\begin{defi}[Interior, exterior positions]
    Given a world $W$ for a selection of inner sizes of a tree $T$, positions in $W$ which have $T$ as their tree structure are called \textit{interior}. Other positions in $W$ are called \textit{exterior}. Exterior positions, as they are seen by interior positions, have tree structures that can be obtained by deleting a leaf of $T$. Denote the set of interior positions by $W^\circ$ and the exterior by $\partial W$.
\end{defi}

\begin{defi}[A world's dimension]
    The \textit{dimension} $d(W)$ of a world $W$ is the number of leaves on its corresponding tree.
\end{defi}

Each interior position in a world can be specified by the sizes of its leaves, and every set of leaf sizes yields an interior position. If you allow your set of leaf sizes to have up to one 0, then this correspondence extends to the entire world. Order the leaves of $T$ $l_1, l_2, ..., l_d$. Then, for a $d$ dimensional world $W$, we have the following natural bijection:
\[ l_W: W \longrightarrow \{ (n_1, n_2, ..., n_d): n_i \in \mathbb{N}_0 \text{ and } n_i=0 \text{ for at most one } i \} \]
given by sending each position to the ordered tuple of its leaf sizes. This bijection lifts to a bijection $l_W^*$ between rays in $W$ and rays in $\mathbb{N}_0^d$.\footnote{A ray in $\mathbb{N}_0^d$ is understood to be a set $\{(a_0, a_1, ..., a_{k-1}, n, a_{k+1}, ..., a_d) : n \geq 0\}$ for any selection of strictly positive $a_i$'s.}

\begin{defi}[Lattice map, Lattice Representation]
    The function $l_W$, defined above, is called the \textit{lattice map}. The \textit{lattice representation} $L(W)$ of $W$ is the image of the $\mathcal{P}$ positions under $l_W$. It is a subset of $\mathbb{N}_0^d$.
\end{defi}


\begin{defi}[Barrier]
A \textit{barrier} $B$ is a partition of a world $W$ into a disjoint union $U_B \sqcup L_B = W$ such that $U_B$ (referred to as the upper set, or set above the barrier) satisfies the following two properties:
\begin{itemize}[topsep=0pt, itemsep=-1ex]
    \item $U_B$ contains all exterior positions 
    \item If a position $p \in U_B$ sees some position $q$, then $q \in U_B$.
\end{itemize}
These are called the barrier axioms. It's worth noting that given two subsets $U$ and $U'$ satisfying the barrier axioms, $U \cup U'$ and $U \cap U'$ satisfy the axioms as well.
\end{defi}

\begin{defi}[Shadow]
    Let $B$ be a barrier in $W$. For each ray $R = \{ r_n \} \subset W$, if there is any $n$ such that $r_n \in L_B$, there is a least such $n$. If $\mathcal{P}(R) < n$, then we say $R$ is \textit{shadowed}. The set of shadowed rays is called the \textit{shadow} $S(B)$ of the barrier.
\end{defi}

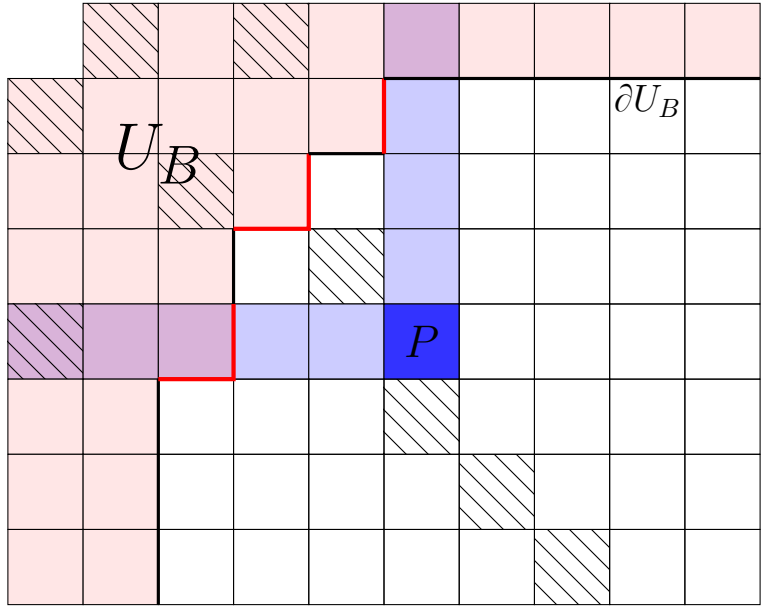
\begin{figure}[!htb]
    \centering
    \begin{tikzpicture}[
    upperp/.style={rectangle, preaction={fill, red!10}, minimum width=1cm, minimum height=1cm, pattern=south east lines, pattern color = black, draw},
    uppern/.style={rectangle, fill=red!10, minimum width=1cm, minimum height=1cm, draw},
    lowerp/.style={rectangle, minimum width=1cm, minimum height=1cm, pattern=south east lines, pattern color = black, draw},
    seenupperp/.style={rectangle, preaction={fill, violet!30}, minimum width=1cm, minimum height=1cm, pattern=south east lines, pattern color = black, draw},
    seenuppern/.style={rectangle, fill=violet!30, minimum width=1cm, minimum height=1cm, draw},
    seenlowern/.style={rectangle, fill=blue!20, minimum width=1cm, minimum height=1cm, draw},
    seer/.style={rectangle, fill=blue!80, minimum width=1cm, minimum height=1cm},
    empty/.style={rectangle, minimum height=1cm, minimum width=1cm, draw=none}
    ]

        \matrix at (0, 0) [matrix of nodes, 
                           nodes in empty cells,
                           nodes={rectangle, minimum width=1cm, minimum height=1cm, draw}, row sep=-\pgflinewidth, column sep=-\pgflinewidth] 
        {
            |[empty]| & |[upperp]| & |[uppern]| & |[upperp]| & |[uppern]| & |[seenuppern]| & |[uppern]| & |[uppern]| & |[uppern]| & |[uppern]| \\
            |[upperp]|& |[uppern]| & |[uppern]| & |[uppern]| & |[uppern]| & |[seenlowern]| & & & & \\
            |[uppern]| & |[uppern]| & |[upperp]| & |[uppern]| & & |[seenlowern]| & & & & \\
            |[uppern]| & |[uppern]| & |[uppern]| & & |[lowerp]| & |[seenlowern]| & & & & \\
            |[seenupperp]| & |[seenuppern]| & |[seenuppern]| & |[seenlowern]| & |[seenlowern]| & |[seer]| & & & & \\
            |[uppern]| & |[uppern]| & & & & |[lowerp]| & & & & \\
            |[uppern]| & |[uppern]| & & & & & |[lowerp]| & & & \\
            |[uppern]| & |[uppern]| & & & & & & |[lowerp]| & & \\
        };

        \node at (-3, 2) {\Huge $U_B$};
        \node at (0.5, -0.5) {\LARGE $P$};
        \node at (3.5, 2.7) {\Large $\partial U_B$};

        \draw[very thick] (-3, -4)--(-3, -1);
        \draw[ultra thick, color=red] (-3, -1)--(-2, -1)--(-2, 0);
        \draw[very thick] (-2, -0)--(-2, 1);
        \draw[ultra thick, color=red] (-2, 1)--(-1, 1)--(-1, 2);
        \draw[very thick] (-1, 2)--(0, 2);
        \draw[ultra thick, color=red] (0, 2)--(0, 3);
        \draw[very thick] (0, 3)--(5, 3);
        
    \end{tikzpicture}
    \caption{This diagram shows an example barrier in a 2D game. The game's $\mathcal{P}$ positions are hatched. The region $U_B$ is tinted red. There is a correspondence between rays which intersect the lower set, and segments along the boundary between the upper and lower sets. One can imagine light sources to the north and west shining down on the boundary, with the $\mathcal{P}$ positions casting shadows. The shadowed segments are colored red. An arbitrary position $P$ is colored blue, and the positions it sees are tinted blue. $P$ is an $\mathcal{N}$ position because it sees the $\mathcal{P}$ position to its left. Alternatively, $P$ is an $\mathcal{N}$ position because it is in a shadowed ray, and below the barrier.}
    \label{fig:enter-label}
\end{figure}

\FloatBarrier

\begin{defi}[isomorphism of barriers]
    Given two $d$-dimensional worlds $W$ and $V$, and barriers $B$ of $W$ and $C$ of $V$, let $(W, B)$ and $(V, C)$ be isometric if two conditions hold:
    \begin{enumerate}
        \item There exists some translation $t: \mathbb{Z}^d \rightarrow \mathbb{Z}^d$ such that $l_W(L_B) = t(l_V(L_C))$
        \item For all rays $R \subset W$ and $R' \subset V$, if $l_W(R) = t(l_V(R'))$, then $R \in S(B) \iff R' \in S(C)$.
    \end{enumerate}
    Informally, isometric barriers are the "same" partition upto some shift, and have the "same" shadow after applying that shift, from the perspective of the lattice.
\end{defi}

\begin{example}

Somewhat abusively, $n$-stack nim can be thought of as tree nim, where there are $n$ leaves protruding from a central vertex of size 0. Similarly, $n$-stack mis\`ere nim can be understood as tree nim, where there are $n$ leaves protruding from a central vertex of size 1. These each have corresponding $n$-dimensional worlds, and there is an isometry of barriers between them. The barriers $B$ and $C$ each contain the exterior, and the position in which every leaf has size 1. Figure 4 illustrates the barriers for $n=2$.

\begin{figure}[!h]
    \centering
    \begin{tikzpicture}[p/.style={rectangle, minimum width=1cm, minimum height=1cm, pattern=south east lines, pattern color = black, draw}, e/.style={rectangle, minimum width=1cm, minimum height=1cm, draw=none}]

        \matrix (normal) at (0, 0.5) [matrix of nodes, nodes in empty cells, nodes={rectangle, minimum width=1cm, minimum height=1cm, draw}, row sep=-\pgflinewidth, column sep=-\pgflinewidth] {
                |[e]| & & & & \\
                & |[p]| & & & \\
                & & |[p]| & & \\
                & & & |[p]| & \\
                & & & & |[p]| \\
                };
        
        \matrix (misere) at (7, 0.5) [matrix of nodes, nodes in empty cells, nodes={rectangle, minimum width=1cm, minimum height=1cm, draw}, row sep=-\pgflinewidth, column sep=-\pgflinewidth] {
            |[e]| & |[p]| & & & \\
            |[p]| & & & & \\
            & & |[p]| & & \\
            & & & |[p]| & \\
            & & & & |[p]| \\
        };
    
        \draw[very thick] (-1.5, -2)--(-1.5, 1);
        \draw[very thick] (5.5, -2)--(5.5, 1);
        \draw[ultra thick, color=red] (-1.5, 1)--(-0.5, 1)--(-0.5, 2);
        \draw[ultra thick, color=red] (5.5, 1)--(6.5, 1)--(6.5, 2);
        \draw[very thick] (-0.5, 2)--(2.5, 2);
        \draw[very thick] (6.5, 2)--(9.5, 2);
    
        \node at (2.9, 2.1) {\LARGE$B$};
        \node at (9.9, 2.1) {\LARGE$C$};
        
    \end{tikzpicture}
    \caption{On the left we have a plot of the $\mathcal{P}$ positions of 2 stack normal nim, and on the right is a plot of the $\mathcal{P}$ positions of 2 stack mis\`ere nim. The same shadow (in red) is cast onto each barrier (represented by the bold black line). Because the cast shadows are the same, the plots will look identical below the barrier.}
    \label{fig:enter-label}
\end{figure}
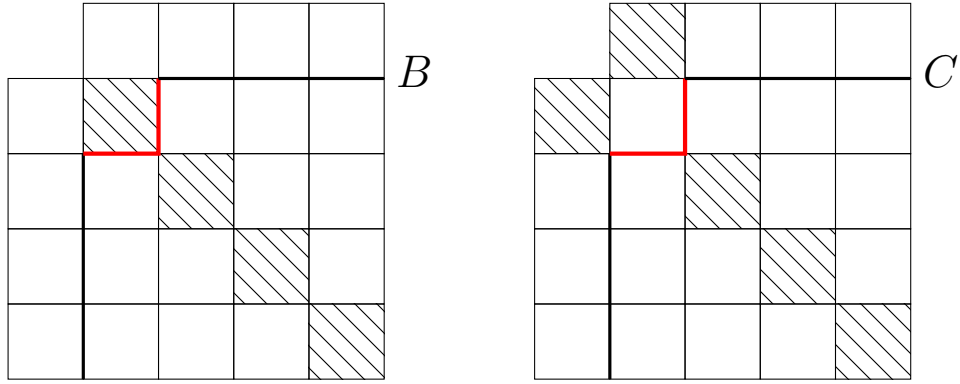

To see that these barriers are indeed isomorphic for all $n$, i.e. that the same shadow is cast on each, requires an inductive argument which makes use of the following theorem.

\end{example}

\begin{theorem}[Barrier Isomorphism Theorem]
    Let $W$ and $V$ be worlds with barriers $B$ and $C$ such that there is an isometry of barriers between $(W, B)$ and $(V, C)$. Let $t$ be the translation such that $l_W(L_B)=t(l_V(L_C))$. Then, if $p\in L_B$ is a position below the barrier, it is $\mathcal{P}$ if and only if $l_V^{-1}tl_W(p)$ is $\mathcal{P}.$
\end{theorem}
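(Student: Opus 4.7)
The plan is to prove this by strong induction on the total coin count $|p| = \sum_i |l_i|$, which decreases strictly along every move in tree nim and hence yields a well-founded induction on $L_B$. The induction hypothesis is that for every $q \in L_B$ with $|q| < |p|$, the claimed equivalence holds for $q$. Because each move in tree nim alters exactly one leaf, $p$ sees positions only along the $d$ axis-parallel rays $R_1, \ldots, R_d$ through $p$, and by the $\mathcal{P}$-Completion Lemma each $R_k$ has a unique $\mathcal{P}$-completion $r_{\mathcal{P}(R_k)}$. Writing $p = r_{a_k}$ in $R_k$, the position $p$ is $\mathcal{N}$ iff $\mathcal{P}(R_k) < a_k$ for some $k$. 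Correspondingly, $p' := l_V^{-1} t l_W(p)$ sits at some index $a_k'$ in the ray $R_k' \subset V$ that matches $R_k$ under $t$, with the offset $a_k - a_k'$ determined by the component of the shift in the direction of $R_k$.

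For the inductive step I would analyze each ray $R_k$ in two subcases. If $R_k$ is shadowed, then $\mathcal{P}(R_k) < m_k \le a_k$, where $m_k$ is the least index with $r_{m_k} \in L_B$, so $p$ already sees the $\mathcal{P}$-completion of $R_k$; by condition~(2) of the isometry, $R_k'$ is also shadowed, and the same inequality shifted by $t$ shows that $p'$ sees the $\mathcal{P}$-completion of $R_k'$. If $R_k$ is unshadowed and $\mathcal{P}(R_k) < a_k$, then $r_{\mathcal{P}(R_k)} \in L_B$ is a $\mathcal{P}$ position seen by $p$, hence has strictly smaller coin count than $p$; the induction hypothesis upgrades it to a $\mathcal{P}$ position along $R_k'$, which by uniqueness of $\mathcal{P}$-completions must coincide with $r'_{\mathcal{P}(R_k')}$, and tracking the shift shows $\mathcal{P}(R_k') < a_k'$ as well. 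The converse direction is entirely symmetric: a shadowed ray for $p'$ matches a shadowed ray for $p$, and a $\mathcal{P}$-completion of $R_k'$ in $L_C$ that is seen by $p'$ corresponds under the induction hypothesis to one in $L_B$ seen by $p$.

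The main obstacle will be the bookkeeping around the shift $t$: one must verify that along each pair of corresponding rays the translation moves indices consistently, so that the least $L_B$-index $m_k$ maps to the least $L_C$-index $m_k'$ and a strict inequality $\mathcal{P}(R_k) < a_k$ in $W$ transfers cleanly to $\mathcal{P}(R_k') < a_k'$ in $V$. Fortunately the base case is folded into the induction automatically: for $p \in L_B$ minimal under $|\cdot|$, no position seen by $p$ lies in $L_B$, so the unshadowed subcase is vacuous and the shadowing data from the isometry alone is enough to conclude.
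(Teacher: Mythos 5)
Your argument is correct, and it takes a genuinely different organizational route from the paper's. The paper first proves the claim for corner positions (lower positions that see only upper positions), using just the shadowedness correspondence from condition (2) of the isometry; it then enlarges the barriers by absorbing the corners, verifies at some length that the enlarged barriers $B'$ and $C'$ are still isomorphic with the same shift $t$, iterates this construction, and finally shows every lower position becomes a corner after finitely many iterations. Your strong induction on leaf-count sidesteps that intermediate-barrier machinery entirely: for each coordinate ray $R_k$ through $p$, a $\mathcal{P}$ position seen by $p$ along $R_k$ either lies in $U_B$ --- which happens exactly when $R_k$ is shadowed, so condition (2) transfers it --- or lies in $L_B$ with strictly smaller leaf-count, where the induction hypothesis applies; and the reverse direction is symmetric because the isometry's shadowedness condition is a biconditional and the induction hypothesis is too. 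Two small points worth making explicit in a polished writeup: first, since the barrier axiom forces $L_B$ to meet each ray in a tail $\{r_n : n \geq m_k\}$, unshadowedness of $R_k$ together with $\mathcal{P}(R_k) < a_k$ really does place $r_{\mathcal{P}(R_k)}$ in $L_B$; second, since $t$ is a translation of $\mathbb{Z}^d$, it shifts the $k$-th coordinate by a fixed amount, so the offset $a_k - \mathcal{P}(R_k)$ transfers exactly and $\mathcal{P}(R_k') < a_k'$ follows by uniqueness of $\mathcal{P}$-completions. What you lose relative to the paper is the explicit ``expanding frontier'' of corner positions; what you gain is a shorter proof that avoids the side lemma about the derived barriers being isomorphic.
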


\begin{proof}
Let $p \in W$ be a position in the lower set $L_B$ which only sees positions in the upper set $U_B$. Call such a position a corner position. I claim $p$ is $\mathcal{N}$ if and only if it is contained in a shadowed ray. If any ray containing $p$ is shadowed, then the ray has a $\mathcal{P}$ position in the upper set, which must be seen by $p$. This would make $p$ an $\mathcal{N}$ position. Conversely, by prop 4.1, for each position $q$ satisfying $p\rightarrow q$ there is a ray $R_q$ containing $p$ and $q$. If $p$ is $\mathcal{N}$, there is a $\mathcal{P}$ position $q$ earlier than $p$ in some ray $R_q$. Because every position earlier than $\mathcal{P}$ is upper, $R_q$ is shadowed, and thus $p$ lies in a shadowed ray.

If $p$ lies in a shadowed ray $R$, the corresponding ray $R'=l_V^{*-1}tl^*_W(R)$ is also shadowed, by the second barrier isomorphism condition. $l_V^{-1}tl_W(p) \in R'$, so $l_V^{-1}tl_W(p)$ also lies in a shadowed ray. Repeating the same argument in the other direction yields an if and only if. Thus, the isomorphism theorem holds for corner positions. Towards a proof for non-corner positions, construct new barriers $B'$ and $C'$ by taking the unions of $B$ and $C$ with their corner positions. I claim these new barriers are also isomorphic with $t$ as the shift. This has two parts:
\begin{enumerate}
    \item $l_W(L_{B'}) = tl_V(L_{C'})$: Let $p \in L_C$ be a lower position and let $l_V(p) = (a_1, ..., a_d)$. $p$ is a corner position if and only if each of $l_V^{-1}(l_V(p)-e_i)$, where $e_i$ is a basis vector $(0, ..., 0, 1, 0, ..., 0)$, is upper.\footnote{It is well defined to subtract a basis vector from a lower position, because the lower set lies in the interior, and thus all the $a_i$ are strictly positive. The barrier axioms require that $U_B$ contains the exterior in part to ensure that this step is valid.} Because $B$ and $C$ are isometric barriers, this happens if and only if each $l_W^{-1}(tl_V(p)-e_i)$ is upper for all $i$, and hence if and only if $l^{-1}tl(p)$ is a corner position for $L_B$. Thus, $t$ takes corner positions to corner positions, and symmetrically, so does $t^{-1}$. It follows that $l_W(L_{B'})=tl_V(L_{C'})$.
    
    \item $R \in S(B) \iff l^{*-1}_Vtl^*_W(R) \in S(C)$: Let $R' = l^{*-1}_Vtl^*_W(R)$.  If $R \cap L_B = R \cap L_{B'}$, then $R \in S(B)$ if and only if $R \in S(B')$. Additionally $R' \cap L_C = R' \cap L_{C'}$, so $R' \in S(C)$ if and only if $R' \in S(C')$ By the isomorphism between $B$ and $C$, we then have $$R \in S(B') \iff R \in S(B) \iff R' \in S(C) \iff R' \in S(C').$$

    If $R \cap L_B \neq R \cap L_{B'}$, then there are two cases: \newline
    (i) $R \in S(B)$. Then, by the isomorphism between $B$ and $C$, $R' \in S(C)$. These imply $R \in S(B')$ and $R' \in S(C')$ respectively. \newline
    (ii) $R \not\in S(B)$. In this case, $R \cap L_{B'} \backslash L_B)$ contains one position $p$, and it is a corner position. $R' \cap (L_{C'}\backslash L_C)$ contains only the corresponding corner position $p' = l_V^{-1}tl_W(p)$. $R \in S(B')$ if and only if $p$ is $\mathcal{P}$. Similarly, $R' \in S(C')$ if and only if $p'$ is $\mathcal{P}$. Using the isomorphism theorem for corner positions, we get $$R \in S(B') \iff p \text{ is } \mathcal{P} \iff p' \text{ is } \mathcal{P} \iff R' \in S(C')$$ as desired.
\end{enumerate}
$W$ and $V$ thus have new isomorphic barriers $B'$ and $C'$. The theorem holds for the corner positions of $B'$ and $C'$, by the same argument as before. We can create new isomorphic barriers $B''$ and $C''$ by adding in the corner positions of $B'$ and $C'$, and iterate this process as long as we please. I claim each lower position appears as a corner in a finite number of iterations. Once this is established, the theorem will be proven.

Let $p \in L_B$ be a lower position in $W$. Let $l_W(p)=(a_1, a_2, ..., a_d)$, and consider the region $$l_W(L_B) \cap \bigtimes_{i=1}^d [1, a_i]$$ in the lattice representation. It contains a finite number of cells, bounded above by the product $a_1a_2\cdot\cdot\cdot a_d$. It suffices to show that the size of this region decreases each iteration, i.e. that there is a corner position in this region for any barrier.

Start with the tuple $(a_1, a_2, ..., a_d)$ and decrement an $a_i$ in such a way as to yield a new lower position. Repeat this as long as possible. You cannot do this forever, because any lower position satisfies $a_i>0$ for all $i$. Thus, at some point you get stuck. If you're stuck, that means you're at a lower position which only sees upper positions, which is exactly the definition of a corner.
\end{proof}

As alluded to in the caption of figure 8, normal and mis\`ere nim have almost exactly the same $\mathcal{P}$ positions. Barrier isomorphism gives an alternative way to prove this classic result.

\begin{theorem}[Mis\`ere Nim Analysis]
    If every stack in a mis\`ere nim position has size 1, then it is $\mathcal{P}$ if and only if the corresponding normal nim position is $\mathcal{N}$. If the position has any stack with multiple coins, it is $\mathcal{P}$ if and only if the corresponding normal nim position is $\mathcal{P}$.
\end{theorem}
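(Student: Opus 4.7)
The plan is to split the theorem into its two clauses, handling the all-ones case directly by a parity argument and deducing the main clause from the Barrier Isomorphism Theorem applied to the setup described in the preceding example, with induction on the number of stacks $n$ to verify the isometry.

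For the all-ones clause, every move removes a size-$1$ stack, so the game lasts exactly $n$ moves regardless of play, and the starting player takes the last coin iff $n$ is odd. Hence in normal play the position is $\mathcal{P}$ iff $n$ is even, and in mis\`ere play it is $\mathcal{P}$ iff $n$ is odd; these are opposite parities, establishing the first clause.

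For the main clause, let $W$ be the world of $n$-stack normal nim (tree nim with $n$ leaves attached to a central vertex of size $0$) and $V$ the analogous world for mis\`ere nim (center of size $1$). Define barriers by $U_B = \partial W \cup \{(1,\ldots,1)\}$ and $U_C = \partial V \cup \{(1,\ldots,1)\}$. The barrier axioms hold since exterior positions only see the exterior, and the all-ones interior position only sees exterior positions (any move reduces a size-$1$ leaf to $0$). Under the identity translation both lower sets correspond to the same lattice subset $\{(a_1,\ldots,a_n) \in \mathbb{N}^n : \text{some } a_i \geq 2\}$, so the first isometry condition is immediate. I would verify the shadow condition by induction on $n$; the base case $n = 1$ is direct, as both games label any single stack of size at least $2$ as $\mathcal{N}$.

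In the inductive step, I split rays in $W$ into two types. If a ray's fixed coordinates are not all $1$, then $r_1 \in L_B$, so the ray is shadowed iff its exterior position $r_0$ is $\mathcal{P}$; since $r_0$ is an $(n-1)$-stack position with some stack of size at least $2$, the inductive hypothesis on the main clause equates its $\mathcal{P}$-status across the two games, and the shadows of $R$ and the corresponding ray $R' \subset V$ agree. The single ray through $(1,\ldots,1)$ has $r_1 \notin L_B$ and enters $L_B$ only at $r_2$, so it is shadowed iff $\mathcal{P}(R) \in \{0,1\}$; applying the all-ones clause at sizes $n-1$ (for $r_0$) and $n$ (for $r_1$) in each game confirms that the $\mathcal{P}$-completion of this ray lies in $\{0,1\}$ in both worlds, so it is shadowed in both. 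This completes the verification of isometry, and the Barrier Isomorphism Theorem then identifies the $\mathcal{P}$-status of every position in $L_B$ across the two games; exterior positions of the $n$-stack world are $(n-1)$-stack games handled directly by the inductive hypothesis. The main subtlety is this two-size application of the all-ones clause, needed precisely because the inductive hypothesis on the main clause fails for the all-ones ray.
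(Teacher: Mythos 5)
Your proof is correct and takes essentially the same approach as the paper: a parity argument for the all-ones clause, then induction on $n$ with a barrier isomorphism between the worlds having upper sets $\partial W \cup \{(1,\ldots,1)\}$ and $\partial V \cup \{(1,\ldots,1)\}$, splitting rays into those through $(1,\ldots,1)$ (shadowed in both by the all-ones clause applied at sizes $n-1$ and $n$) and all others (shadowed or not in both by the main-clause inductive hypothesis applied to the exterior position). The only slip is calling the ray through $(1,\ldots,1)$ ``single'' -- there are $n$ such rays, one per variable-leaf choice -- but by symmetry they behave identically, so nothing changes.
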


\begin{proof}
    If a every stack in a nim or mis\`ere nim position has a single coin, then there are no meaningful choices to be made. If there is an even number of stacks, the position is $\mathcal{P}$ in normal play and $\mathcal{N}$ in mis\`ere play. If there is an odd number of stacks, then the position is $\mathcal{N}$ in normal play and $\mathcal{P}$ in mis\`ere play. Now we use this, and proceed by induction on $n$ to show that all other $n$-stack mis\`ere nim positions are the same outcome class as their normal counterparts.

    Base case: If $n=1$, then every position with more than 1 coin is $\mathcal{N}$ in both kinds of play, as the first player to move can win by either reducing to 0 coins in the normal case or 1 coin in the mis\`ere case.

    Inductive step: Suppose any position with less than $n$ stacks satisfies the theorem. Let $W$ be the world with $n$ stack nim positions as its interior, and $n-1$ stack nim positions as its exterior. Let $V$ be the corresponding world of mis\`ere nim positions.

    Construct barriers $B$ and $C$ of $W$ and $V$ respectively, for which the upper sets contain the exterior and the $(1, 1, ..., 1)$ position. I claim there is an isomorphism of barriers between $B$ and $C$. This would imply by the barrier isomorphism theorem that all corresponding $n$-stack nim and mis\`ere nim positions with multiple coin stacks are of the same outcome class. The two lower sets have the same image under the lattice map, so it suffices to show that shadowed rays correspond to shadowed rays. There are two cases to consider:
    \begin{enumerate}
        \item Rays which contain $(1, 1, ...., 1, 1)$. These rays also contain some $(1, ..., 1, 0, 1, ..., 1)$. One of these is $\mathcal{P}$ is mis\`ere nim, and the other is $\mathcal{P}$ in normal play, so rays of this type are shadowed in both.
        \item Rays which do not contain $(1, 1, ..., 1, 1)$ intersect the upper set in exactly one position. It is exterior, so it has $n-1$ stacks. By the inductive hypothesis, the position is $\mathcal{P}$ in mis\`ere play if and only if it is $\mathcal{P}$ in normal play, so a ray of this type is shadowed in $W$ if and only if its corresponding ray is shadowed in $V$.
    \end{enumerate}
    This establishes an isomorphism of barriers, as desired.
\end{proof}

Barrier isomorphism has powerful implications, is a rather stringent condition. It can be tweaked to give a weaker but more widely applicable tool, formalized in the next two definitions and theorem.

\begin{defi}[Middle set]
    Let $B$ be a barrier for the world $W$. A \text{middle set} for $B$ is a subset $M_B \subset L_B$ such that for any $p \in M_B$ and any $q$ seen by $p$, $q \in U_B \cup M_B$.
\end{defi}

\begin{defi}[Partial Isomorphism of Barriers]
    Let $B$ be a barrier of $W$ with middle set $M_B$, and let $C$ be a barrier of $V$ with middle set $M_C$. There is a partial isomorphism of barriers $B$ and $C$ if there is a translation $t$ such that
    \begin{enumerate}
        \item There exists some translation $t: \mathbb{Z}^d \rightarrow \mathbb{Z}^d$ such that $l_W(M_B) = t(l_V(M_C))$
        \item For all rays $R \subset W$ such that $R \cap M_B \neq \emptyset$, $R \in S(B) \iff l_V^{*-1}tl_W^*(R) \in S(C)$.
    \end{enumerate}
\end{defi}

\begin{theorem}[Partial Barrier Isomorphism Theorem]
    Let $B$ and $C$ be barriers of worlds $W$ and $V$, that are partially isomorphic with middle sets $M_B$ and $M_C$ and translation $t$. If $p \in M_B$, then $p$ is $\mathcal{P}$ if and only if $l_V^{-1}tl_W(p)$ is $\mathcal{P}$. In short, a partial isomorphism links the outcome classes of positions in the middle sets.
\end{theorem}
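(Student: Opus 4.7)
The plan is to mirror the proof of the Barrier Isomorphism Theorem, but with everything restricted to positions in $M_B$ and rays that intersect $M_B$. Call $p \in M_B$ a \emph{middle corner} if every position $p$ sees lies in $U_B$; by the middle set axiom, this is equivalent to $p$ not seeing any position in $M_B$. First I would handle middle corners directly, repeating the corner argument from the full theorem: $p$ is $\mathcal{N}$ iff some ray through $p$ is shadowed. If $p$ sees a $\mathcal{P}$-position $q$, then $q \in U_B$ by the corner hypothesis, and the unique ray through $p$ and $q$ has its $\mathcal{P}$-completion at $q$, above the barrier; conversely, a shadowed ray through $p$ supplies a $\mathcal{P}$-position in $U_B$ that $p$ sees. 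Since every ray through $p$ automatically intersects $M_B$ at $p$, condition (2) of partial isomorphism applies and transfers shadowedness to corresponding rays through $p' := l_V^{-1} t l_W(p)$.

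To finish the corner case I need $p'$ to also be a middle corner of $M_C$. This amounts to showing that the seeing relation between middle-set elements is preserved by the lattice translation: if $p'$ sees some $q' \in M_C$, then by condition (1) we have $q := l_W^{-1} t^{-1} l_V(q') \in M_B$; the ray through $p'$ and $q'$ corresponds under $t$ to a ray through $p$ and $q$; and because translation preserves the ordering of indices along a ray, $p$ must see $q$, contradicting $p$ being a middle corner. So middle corners correspond to middle corners, and the theorem holds for them.

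For general $p \in M_B$, I would strip corners iteratively, defining $B^{(1)}$ by $U_{B^{(1)}} = U_B \cup \{\text{middle corners of } M_B\}$ and $M_{B^{(1)}} = M_B \setminus \{\text{middle corners of } M_B\}$, with $C^{(1)}$ analogous. Routine checks show $B^{(1)}$ is still a barrier (newly upper positions only see into $U_B$), $M_{B^{(1)}}$ is still a middle set, and condition (1) of partial isomorphism for $(B^{(1)}, C^{(1)})$ follows from the corner-to-corner correspondence. The main obstacle will be verifying condition (2) for $(B^{(1)}, C^{(1)})$: a ray intersecting $M_{B^{(1)}}$ may have its smallest lower-set index shift forward when corners are promoted, potentially changing its shadow status. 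The case analysis parallels the one in the full Barrier Isomorphism proof and invokes the already-established corner case to decide whether a newly promoted corner plays the role of the shadowing $\mathcal{P}$-position. Once (2) is verified, termination follows as in the full theorem: for $p \in M_B$ with $l_W(p) = (a_1, \ldots, a_d)$, the finite set $M_B \cap \prod_i [1, a_i]$ strictly shrinks each iteration (one can always walk down coordinates inside $M_B$ until stuck at a corner), so $p$ eventually becomes a middle corner and the corner case applies.
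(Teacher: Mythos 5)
Your proposal is correct and takes exactly the approach the paper intends: the paper's own proof is the single sentence ``the proof proceeds identically to that of the full isomorphism theorem,'' and your write-up is precisely that adaptation, with the helpful extra observations that a middle corner sees only upper positions (so the corner argument transfers verbatim) and that the descending walk used for termination can be kept inside $M_B$. The one step you flag as the ``main obstacle''---re-verifying condition (2) after promoting corners---is handled by the same two-case analysis as in the full theorem, so nothing further is needed.
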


\begin{proof}
    The proof proceeds identically to that of the full isomorphism theorem.
\end{proof}

\section{Periodicity}

\begin{defi}[Biray]
    A biray is a subset $B \subset W$ of a $d$-dimensional world $W$ satisfying $$l_W(B) = \{ (m, n, a_3, a_4, ..., a_d) : m, n \geq 0, (n, m)\neq (0, 0) \}$$ for some tuple of positive integers $(a_3, a_4, ..., a_d)$. $B_b$ denotes the ray $R=\{r_n\}$ with $l_W(r_n)=(m, b, a_3, a_4, ..., a_d)$. $B_{b,c}$ denotes the position $l_W^{-1}(b, c, a_3, ..., a_d)$.
\end{defi}

\begin{example}
A ray is more or less a position with one unspecified leaf. A biray is more or less a position with two unspecified leaves.
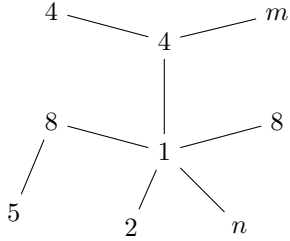
\begin{figure}[!h]
    \centering
    \begin{tikzpicture}
        \node (1) at (0,0) {1};
        \node[above=of 1] (red4) {4};
        \node (red8) at (-1.5, 0.4) {8};
        \node[above=of red8] (green4) {4};
        \node (green8) at (1.5, 0.4) {8};
        \node[above=of green8] (m) {$m$};
        \node (n) at (1, -1) {$n$};
        \node[left=of n] (2) {2};
        \node (5) at (-2, -0.8) {5};

        \draw (green4)--(red4);
        \draw (red4)--(m);
        \draw (red4)--(1);
        \draw (green8)--(1);
        \draw (red8)--(1);
        \draw (red8)--(5);
        \draw (1)--(2);
        \draw (1)--(n);
    \end{tikzpicture}
    \caption{The position $B_{n,m}$ for a certain biray $B$}
    \label{fig:enter-label}
\end{figure}
    
\end{example}

\begin{defi}[to see (as a biray)]
    Let $B$ and $B'$ be birays, not necessarily in the same world. $B$ sees $B'$ (denoted $B \rightarrow B'$) if for all non-negative $(b, c)$ not both 0, $B_{b,c} \rightarrow B'_{b, c}$.
\end{defi}

\begin{defi}[Size of a biray]
    Let $B$ be a biray in a $d$-dimensional world $W$ such that $$l_W(B) = \{ (m, n, a_3, a_4, ..., a_d) : m, n \geq 0, (n, m)\neq (0, 0) \}.$$ Then, the size $|B|$ of $B$ is given by $|B| = a_3+a_4+...+a_d$.
\end{defi}

Notice that if $B \rightarrow B'$ and $B, B' \subset W$, then $|B| > |B'|$.

\begin{defi}[Completion Sequence]
    Given a biray $B$, its completion sequence is given by $\{\mathcal{P}(B_n)\}_{n=1}^\infty$.
\end{defi}

\begin{theorem}[Periodicity Theorem]
    The completion sequence of a biray is additively periodic. Equivalently, for any biray $B$, there exist constants $N_B$ and $\pi_B$ such that for all $n, m > N_B$, $l_W^{-1}(m, n, a_3, ..., a_d)$ is $\mathcal{P}$ if and only if $l_W^{-1}(m+\pi_B, n+\pi_B, a_3, ..., a_d)$ is $\mathcal{P}$.
\end{theorem}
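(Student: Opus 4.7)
The plan is to induct on $|B|$. Let $B^{(1)}, \ldots, B^{(r)}$ be the birays seen by $B$; each arises by reducing or removing a non-variable leaf and so satisfies $|B^{(i)}| < |B|$ (possibly in a world of lower dimension). By the inductive hypothesis, every completion sequence $f^{(i)}(m) := \mathcal{P}(B^{(i)}_m)$ is additively periodic, with period $\pi^{(i)}$ and threshold $N^{(i)}$, so the offsets $f^{(i)}(m) - m$ depend only on $m \bmod \Pi$ once $m$ is past all the $N^{(i)}$, where $\Pi := \mathrm{lcm}\{\pi^{(i)}\}$. The base case $|B| = 0$ forces $d = 2$; the collection $\{B^{(i)}\}$ is then empty, and the argument below applies with $\Pi = 1$.

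The crux is a mex-type recurrence for $f(m) := \mathcal{P}(B_m)$. Write $k^{(1)}$ and $m^{(2)}$ for the $\mathcal{P}$-completions of the two ``axis'' rays (i.e.\ the rays obtained from $B$ by fixing one of the two variable leaves to size $0$ and varying the other). Examining which positions $B_{m,k}$ sees -- other positions in $B$, corresponding positions in the $B^{(i)}$, and positions on the two axis rays -- together with the uniqueness part of the $\mathcal{P}$-Completion Lemma shows that for $m$ past a finite exceptional set (in particular $m > m^{(2)}$),
\[ f(m) \;=\; \min\Bigl\{\,k \geq 1 \;:\; k \notin \{f(m') : 1 \leq m' < m\} \,\cup\, \{f^{(i)}(m)\}_{i=1}^{r} \,\cup\, \{k^{(1)}\}\,\Bigr\}. \]
Applying the $\mathcal{P}$-Completion Lemma to the column ray $B_m$ (whose leaf sum is $m + |B|$) and to the analogous row rays yields both the uniform bound $|f(m) - m| \leq |B| + 1$ and the fact that $f$ is injective, with image equal to $\mathbb{N}_0$ minus at most two ``eternal gaps'' coming from the axis rays.

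With these ingredients in hand, encode the state at time $m$ as $s(m) = (\widetilde G(m),\, m \bmod \Pi)$, where
\[ \widetilde G(m) \;:=\; \{\,g - m \;:\; g \notin \{f(1), \ldots, f(m)\},\; m - |B| \leq g \leq m + |B| + 1\,\} \;\subset\; \{-|B|, \ldots, |B|+1\} \]
records the ``local gap pattern'' near $m$. The bound on $|f(m)-m|$ and the injectivity of $f$ force the state space to be finite. The mex recurrence, combined with the inductive periodicity of the $f^{(i)}$, lets one read $f(m{+}1) - (m{+}1)$ directly off $s(m)$; the new $\widetilde G(m{+}1)$ is then obtained from $\widetilde G(m)$ by shifting down by one, inserting $|B|+1$, and deleting the new offset. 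Hence the update $s(m{+}1) = \Phi(s(m))$ is a deterministic map on a finite state space, so $s$ -- and therefore $f(m) - m$ -- is eventually periodic. This yields $f(m + \pi_B) = f(m) + \pi_B$ for all sufficiently large $m$, which is equivalent to the theorem's claim. The main obstacle I anticipate is pinning the mex recurrence down precisely, particularly handling boundary coincidences when $m = m^{(2)}$, when $k^{(1)} = 0$, or when some $f^{(i)}(m)$ collides with the ``should-be-eternal-gap'' $0$; once the recurrence is nailed down, the finite-state argument itself is routine.
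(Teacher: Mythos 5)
Your core mechanism --- the mex recurrence for $f(m)=\mathcal{P}(B_m)$, the bound $|f(m)-m|\leq|B|+1$ and injectivity from the $\mathcal{P}$-Completion Lemma, and the finite-state encoding $(\widetilde G(m),\,m\bmod\Pi)$ --- is sound, and it is a genuinely different route from the paper's. The paper never writes down a recurrence: it builds a sequence of barriers $B^*_n$ with lower sets $[N+n\pi,\infty)^2\times\prod_i[a_i,\infty)$, shows via the same $\mathcal{P}$-completion inequality that only finitely many ``type 1'' rays can ever be shadowed, pigeonholes on the resulting shadow patterns to find two partially isomorphic barriers, and then invokes the Partial Barrier Isomorphism Theorem. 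Both arguments are pigeonhole arguments whose finiteness ultimately comes from $\mathcal{P}(R)\leq|R|+1$; yours is closer in spirit to the finite-state-machine proofs for Wythoff-like games and to the Abrams--Cowen-Morton array argument that the paper says it generalizes, and is more self-contained, while the paper's barrier formulation buys reusable machinery (the same tool drives the mis\`ere analysis and the trivial-center theorem).

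There is, however, one genuine gap: your induction is not well-founded as set up. You induct on $|B|$ alone and assert that every biray $B^{(i)}$ seen by $B$ satisfies $|B^{(i)}|<|B|$, ``possibly in a world of lower dimension.'' That inequality is only guaranteed when $B^{(i)}$ lies in the \emph{same} world. When a move removes a non-variable leaf entirely, its neighbor --- if it had degree $2$ --- becomes a leaf of the new tree, and the new biray's size then includes that formerly internal vertex's fixed but arbitrary size. Concretely, take the tree with center $w$ carrying leaves $l_1,l_2$ and a pendant path $w-v-l_3$; fix $|v|=c$ and $|l_3|=a_3$ with $c>a_3$, and let $B$ vary $l_1,l_2$, so $|B|=a_3$. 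Removing $l_3$ exposes $v$ and yields a seen biray of size $c>|B|$, which your inductive hypothesis does not cover. The paper avoids this with a double induction --- outer on the number of vertices of the tree, inner on $|B|$ --- and the same fix works verbatim here: seen birays on smaller trees are covered by the outer hypothesis regardless of their size, and only the same-world ones need $|B^{(i)}|<|B|$. The remaining loose ends you flag (the exceptional row $m=m^{(2)}$, the eternal gaps $k^{(1)}$ and possibly $0$, and checking that the mex provably lands inside the tracked window so that values below it need not be recorded) are all handleable exactly as you anticipate.
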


\begin{proof}
    A biray is a subset of a world, which is associated to a tree $T$. Let $\#v$ be the number of vertices of $T$. The proof proceeds by induction on $\#v$.

    Base case: There are no birays for worlds with trees with 0 or 1 vertices. When the world's tree has two vertices, there is one biray, and it is the whole world. On a tree with two vertices, both are leaves, so both are available for play and thus the game is equivalent to normal nim. The completion sequence of the biray is $1, 2, 3, ...$. This is clearly additively periodic, with period $\pi=1$.

    Inductive step: Suppose the theorem holds for all trees with $\#v-1$ vertices. Let $W$ be a $d$-dimensional world whose corresponding tree has $\#v$ vertices. The proof that the theorem holds for all birays $B$ in $W$ proceeds by induction on $|B|$.

    Base case: The base case is more or less the inductive hypothesis of the induction on $\#v$. This will be come clear in a moment.

    Inductive step: Let $B \subset W$ be a biray. Suppose the theorem holds for all birays $B' \subset W$ with $|B'|<|B|$. $B$ sees a number of birays. The birays in $W$ which it sees all have smaller size, and so have additively periodic completion sequences. The rest of the birays $B$ sees live in worlds with smaller corresponding trees. By the first inductive hypothesis, this second class of birays also obeys the periodicity theorem. Let $$B = \{ l_W^{-1}(m, n, a_3, ..., a_d) \;|\;m, n \geq 0, (m, n)\neq (0, 0) \}.$$
    $B$ sees a finite number of birays. To see this, notice that there is a natural one-to-one correspondence $$\{B' \;|\; B \rightarrow B' \} \longleftrightarrow \{ (b_3, b_4, ..., b_d) \;|\; 0 \leq b_i < a_i \text{ for some } i,\; b_j=a_j \text{ for } j\neq i \} $$ and the size of the latter set is equal to the finite quantity $a_3+a_4+...+a_d = |B|$. Let $\pi$ be the least common multiple of the periods of the completion sequences of the birays seen by $B$. Let $N$ be the maximum preperiod length across the same set of sequences. Then, for any biray $B' \leftarrow B$, and $m, n>N$, we have $$B'_{m, n} \text{ is } \mathcal{P} \iff B'_{m+\pi, n+\pi} \text{ is } \mathcal{P}.$$
    Construct a sequence of barriers $B^*_n$,\footnote{I apologize for the syntactic overloading of the letter $B$. In this section, I always include an asterisk when using $B^*$ to denote a barrier. I also use the perhaps strange looking $\pi$ to denote period, to avoid collision with $p$, which represents a position. \textit{$\pi$ is an integer, not the circle constant.}} with $$L_{B^*_n} = [N+n\pi, \infty) \times [N+n\pi, \infty) \times \bigtimes_{i=3}^d [a_i, \infty)$$ and $$M_{B^*_n} = B \cap L_{B^*_n} = [N+n\pi, \infty) \times [N+n\pi, \infty) \times \bigtimes_{i=3}^d \{a_i\}.$$ If any two barriers $B^*_n$ and $B^*_m$ are partially isomorphic with middle sets $M_{B^*_n}$ and $M_{B^*_m}$, then by the partial barrier isomorphism theorem, we will have (letting $m<n$ WLOG) $$\forall k, l > N+mp, \>\>\> B_{k, l} \text{ is } \mathcal{P} \iff B_{k+(n-m)\pi, l+(n-m)\pi} \text{ is } \mathcal{P},$$ Which gives that $B$ has an additively periodic completion sequence with period dividing $(n-m)\pi$.

    Suppose, by way of contradiction, that $B^*_n$ and $B^*_m$ with middle sets $M_{B^*_n}$ and $M_{B^*_m}$ (as defined above) are not partially isomorphic for any $n, m$. It is clear that $M_{B^*_n}$ and $M_{B^*_m}$ are congruent via the translation $$t: \Vec{u} \mapsto \Vec{u} + ((n-m)\pi, (n-m)\pi, 0, 0, ..., 0),$$ so the isomorphism must fail by mapping a shadowed ray to an unshadowed one, or vice versa.

    There are two kinds of rays which intersect the middle set: those which lie in $B$ (variable leaf is one of the first two coordinates) and those which don't (variable leaf is a later coordinate). I claim $t$ and $t^{-1}$ each take shadowed rays of the second type to shadowed rays. If a ray $R$ intersects the middle set $M_{B^*_m}$ but does not lie in $B$, then $$R \cap U_{B^*_m} \subset \bigcup_{B' | B \rightarrow B'} B'.$$ If $R$ is shadowed, then its intersection with the upper set contains some $\mathcal{P}$ position $p=l_W^{-1}(b_0, b_1, ..., b_d) \in B'$. $b_0, b_1 > N$, so $p+(\pi, \pi, 0, 0, ..., 0)$ is $\mathcal{P}$. Applying the periodicity result for $B'$ again, so is $p+(2\pi, 2\pi, 0, 0, ..., 0)$, and inductively so is $p+((n-m)\pi, (n-m)\pi, 0, 0, ..., 0)$. Hence $t(R)$ contains a $\mathcal{P}$ position and thus is also shadowed. The same argument shows $t^{-1}$ takes shadowed type 2 rays to shadowed rays.

    If the would-be isomorphism is going to fail then, it must do so by taking a shadowed type 1 ray to an unshadowed ray, or vice-versa. In effect, the set of shadowed type 1 rays corresponding to each barrier $B^*_n$ partitions the barriers into isomorphism equivalence classes. The assumption we are trying to contradict is that every barrier is in a distinct equivalence class. I claim there are finitely many equivalence classes, which gives by the pigeonhole principle that there exist values $m$ and $n$ for which $B^*_n$ and $B^*_m$ are isomorphic.

    Recall that by the $\mathcal{P}$-completion lemma, for any ray $R$, $\mathcal{P}(R) \leq |R|+1$. Unpacking definitions, this is equivalent to the following triangle inequality-like statement:

    Let $p = l_W^{-1}(a_1, a_2, ..., a_d)$ be a $\mathcal{P}$ position. Then, for each $i$, $$a_i \leq 1 + \sum_{j \neq i} a_j.$$
    Recall that $$B = \{ l_W^{-1}(m, n, a_3, ..., a_d) \;|\; m, n \geq 0, (m, n)\neq (0, 0) \}.$$ 
    The type 1 rays, i.e. those contained in $B$, arise from fixing $m$ or $n$. Let $R_k$ be the ray obtained by fixing $m=k$, and let $R'_l$ be the ray obtained by fixing $n=l$.

    If $R_k$ is shadowed under $B^*_m$, it has an upper $\mathcal{P}$ position $p = l_W^{-1}(k, n, a_3, ..., a_d)$, where $n \leq N+m\pi$. By the $\mathcal{P}$-completion inequality, $$k \leq 1+N+m\pi+\sum_{i=3}^d a_i.$$ Combining with $k \geq N+m\pi$, which is necessary for $R_k$ to intersect $M_{B^*_m}$, there is a finite set of values of $k$ for which $R_k$ may be shadowed. A similar result holds for $l$ and $R'_l$. The isomorphism translations $t_{n, m}$ map the sets of shadowable rays to each other. Thus, each isomorphism equivalence class is characterized by a subset of a finite set. It follows that there are finitely many, completing the proof.
    
\end{proof}





\section{Forest Nim and Grundy Numbers}

The sum game of several tree nim games is called forest nim. The ideas of ray, world, and biray extend easily to this context, and the corresponding theorems still hold, with essentially the same proofs. Notice that we can characterize the Grundy value of a tree nim position as follows: Let $p$ be a tree nim position with corresponding tree $T$. Imagine playing forest nim with underlying forest $T \sqcup v$, where $v$ is an isolated vertex. Then, letting $v$ be the variable leaf and fixing everything on $T$ to match $p$, we get a ray. That ray's $\mathcal{P}$ completion is the Grundy value $\mathcal{G}(p)$ of the original tree nim position. With this understanding, the $\mathcal{P}$ completion lemma and periodicity result give the following corollary regarding tree nim Grundy numbers:

\begin{cor}
    Given a ray $\{r_n\}_{n=0}^\infty$ of tree nim positions, the sequence $\{\mathcal{G}(r_n)\}_{n=0}^\infty$ is additively periodic, and contains every integer once. 
\end{cor}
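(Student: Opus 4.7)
The plan is to recognize $\{\mathcal{G}(r_n)\}$ as the completion sequence of a suitable biray, and then invoke the Periodicity Theorem for additive periodicity and the $\mathcal{P}$-completion Lemma for the ``every integer once'' claim. The definition of $\mathcal{G}$ preceding the corollary already embeds the Grundy computation into forest nim, so most of the proof is bookkeeping.

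Let $T$ be the tree underlying $r_n$ for $n \geq 1$ (with variable leaf $l$), and let $W$ be the forest nim world corresponding to $T \sqcup \{v\}$, where $v$ is an adjoined isolated vertex. Let $B \subset W$ be the biray in which $|l|$ and $|v|$ both vary while all other leaves are fixed to match $r_n$; write $B_{n,k}$ for the position with $|l| = n$ and $|v| = k$. Unpacking the definition of $\mathcal{G}$, for each $n \geq 1$ the value $\mathcal{G}(r_n)$ is exactly the $\mathcal{P}$-completion of the ray in $B$ obtained by fixing $|l| = n$ and varying $|v|$, so $\{\mathcal{G}(r_n)\}_{n \geq 1}$ is (up to a labeling convention) the completion sequence of $B$. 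The single extra value $\mathcal{G}(r_0)$ is given analogously by a ray in the smaller world obtained by deleting $l$.

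Additive periodicity of $\{\mathcal{G}(r_n)\}_{n \geq 0}$ then follows immediately from the Periodicity Theorem applied to $B$, since eventual additive periodicity is unaffected by a single initial term.

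It remains to show that every non-negative integer appears exactly once. Injectivity: if $\mathcal{G}(r_n) = \mathcal{G}(r_m) = k$ with $n > m$, then both $B_{n,k}$ and $B_{m,k}$ are $\mathcal{P}$, yet the first sees the second via the tree move $r_n \rightarrow r_m$ on the $l$-component, contradicting that no $\mathcal{P}$-position sees another. Surjectivity: for each $k \geq 1$, apply the $\mathcal{P}$-completion lemma to the \emph{transverse} ray in $B$ obtained by fixing $|v| = k$ and varying $|l|$; the resulting $\mathcal{P}$-completion $n$ satisfies $\mathcal{G}(r_n) = k$. For $k = 0$, apply the $\mathcal{P}$-completion lemma directly to the tree nim ray $\{r_n\}$, producing some $n$ with $r_n$ itself $\mathcal{P}$. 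The only real obstacle I anticipate is tracking the edge cases at coordinate $0$ (which the biray definition excludes at $(0,0)$), but these are routine and can each be dispatched by matching the $\mathcal{P}$-completion argument to an appropriate smaller world.
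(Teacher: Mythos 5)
Your proposal is correct and takes essentially the same route as the paper, which merely gestures at the argument: embed the ray $\{r_n\}$ into a biray $B$ of the forest nim world on $T \sqcup v$ (with $v$ the adjoined leaf), observe that $\mathcal{G}(r_n)$ is the $\mathcal{P}$-completion of the $v$-ray through $r_n$, and then read off additive periodicity from the Periodicity Theorem and the once-and-only-once property from the $\mathcal{P}$-completion Lemma applied along the two families of rays in $B$. Your injectivity/surjectivity bookkeeping and handling of the $n=0$ and $k=0$ edge cases spell out details the paper leaves implicit, but they follow the paper's intended argument rather than offering an independent one.
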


\begin{remark}
    In \cite{end-nim-1} Albert and Nowakowski note this phenomenon for the specific case of 3-stack end nim. While they do not directly follow from one another, a similar result is known for Wythoff's game \cite{additive-periodicity, fsm-periodicity}, and is proven using similar techniques.
\end{remark}

\section{Tripod Nim}

Tree nim where the tree is a path also goes by the names "End-Nim" and "Burning the Candlestick at Both Ends". This game was solved by Albert and Nowakowski in \cite{end-nim-1}, leaving tripod nim as the simplest unsolved class of tree nim. The full solution and proof are beyond the scope of this paper, but the following result will be helpful later: In 3 stack End-Nim, the $\mathcal{P}$ positions are exactly those of the form $(n, m, n)$ for $n \neq m$. In tripod nim, there are three leaf stacks and one central stack, which is not in play until two leaf stacks are exhausted. The result about 3 stack End-Nim is relevant because tripod nim reduces to 3 stack End-Nim, once the first leaf is exhausted.


\begin{figure}[!b]
    \centering
    \includegraphics[scale=0.4]{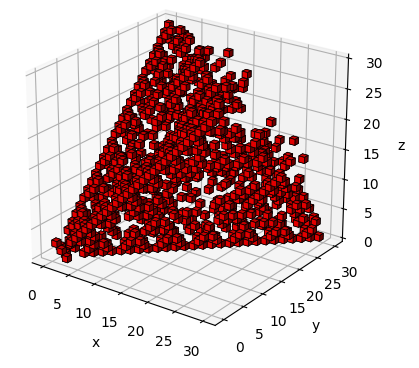}
    \quad
    \includegraphics[scale=0.4]{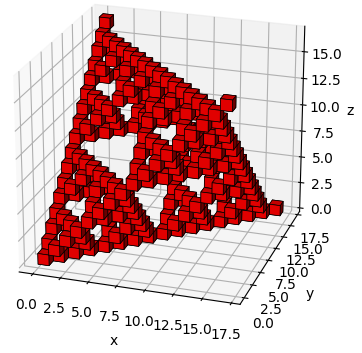}
    \caption{On the left, $\mathcal{P}$ positions for tripod nim with center $=2$. On the right, $\mathcal{P}$ positions for 3 stack nim.}
    \label{tripod_2}
\end{figure}

Fixing some value $o$ for the central vertex gives a 3-dimensional tripod nim world. The interior is the set of tripod nim positions with the specified center, and the exterior is the set of 3-stack end nim positions with $o$ as the central stack. Because this world is 3-dimensional, we can plot the lattice representation, and obtain a plot such as the one below. 

We can also do this for normal nim with 3 stacks, and we get a very pretty Sierpinski pyramid.\cite{serpinski}

\begin{defi}[$C_c(a, b)$]
    The tripod nim positions with center $c$ and sizes $a$ and $b$ for two of the leaves, along with the 3-stack end nim position $(a, c, b)$, form a ray. Define $C_c(a, b)$ to be the $\mathcal{P}$-completion of that ray.
\end{defi}

\begin{remark}
    By the symmetry between the leaves in tripod nim, $C_c(a, b)=C_c(b, a)$. Additionally, $C_k(a, b)=c \iff C_k(b, c)=a \iff C_k(c, a)=b$.
\end{remark}

For a given center $c$, one can construct a table, or infinite matrix, of outputs of $C_c$. This matrix can be thought of as a projection onto $\mathbb{N}^2$ of the structure in figure 3, where $C_c(a, b)$ is the height of the red cell in the column $(a, b)$. Each row and column of the matrix corresponds to a biray. By the $\mathcal{P}$-completion lemma, every row and column of this matrix will contain each non-negative integer exactly once. Furthermore, as a consequence of the symmetry described in remark 1, the matrix is symmetric, and every row and column is an involution. In other words, the corresponding array has the "locator property" explored in \cite{locator-theorem}. To give you a feel for things, table 2 gives a portion of the array for center 6.

\begin{table}[!ht]
\centering
    \begin{tabular}{c|c c c c c c c c c c c c c c c c c c c c c c c}
        &0&1&2&3&4&5&6&7&8&9&10&11&12&13&14&15\\
        \hline
        0&6&1&2&3&4&5&0&7&8&9&10&11&12&13&14&15\\
        1&1&0&3&2&5&4&6&8&7&10&9&12&11&14&13&16\\
        2&2&3&0&1&6&7&4&5&9&8&11&10&13&12&15&14\\
        3&3&2&1&0&7&6&5&4&10&11&8&9&14&15&12&13\\
        4&4&5&6&7&0&1&2&3&11&12&13&8&9&10&16&17\\
        5&5&4&7&6&1&0&3&2&12&13&14&15&8&9&10&11\\
        6&0&6&4&5&2&3&1&9&13&7&12&14&10&8&11&18\\
        7&7&8&5&4&3&2&9&0&1&6&15&13&16&11&17&10\\
    \end{tabular}
    \caption{Tripod Nim With Center $=$ 6}
    \label{tab:table_6}
\end{table}

\begin{remark}
    The given definition of $C_c(a, b)$ only applies to positive $a$ and $b$. This can be extended to pairs where one of $a$ and $b$ is 0 using the symmetry properties outlined in remark 8.1. Additionally, $C_c(0, 0)$ can be taken to equal $c$ by convention. This extends the pattern that every row and column contains each integer exactly once.
\end{remark}

\begin{example}
    The appearance of the value 8 at $(1, 7)$ in table 2 indicates that the position with center 6 and leaves 1, 7, and 8 is a $\mathcal{P}$ position. Denote that tripod nim position as $(6, (1, 7, 8))$, and in general let $(o, (a, b, c))$ refer to the tripod nim position with center $o$ and leaves $a$, $b$, and $c$. 
\end{example}

Understanding the distribution of the values in these matrices is equivalent to understanding tripod nim. Thus, it would nice to be able to generate them without actually referring directly to tripod nim. There are two ways to do this.


Recall that given a world $W$ and barrier $B$, the outcome class of a corner position is determined by whether the position lives in shadowed ray. Applying this inductively, once we have the exterior of the world, the interior is determined. In the 2-D array, the exterior of the world corresponds to the 0th row and column, as well as the 0's in the table. Using the earlier-mentioned analysis of 3 stack End-Nim, we find that for center $c$, the border $\mathcal{P}$ positions in the 3-D lattice will be at coordinates $(0, n, n)$ and their permutations, for all $n \neq c$. This means that upon projection from 3D to 2D, the 0th row and column will each read $c, 1, 2, 3, ..., c-1, 0, c+1, c+2, ...$. This pattern is visible above in table 2. The 0's of the table, aside from the ones at $(0, c)$ and $(c, 0)$, occur along the main diagonal. Every entry of the main diagonal is 0, expect for $(c, c)$.

Now that we have the 0th rows and columns of the table, there are two equivalent ways to fill the rest in. The first is to go each number at a time, first filling in the 0's, then the 1's, then the 2's, and so on. For each number $n$, we fill in row by row, placing $n$ in the earliest available spot which does not have an $n$ directly above it. 

\begin{figure}[!h]
    \centering
    \begin{tikzpicture}
        \matrix at (-0.22, 0.25) [matrix of nodes] 
        {
              & 0 & 1 & 2 & 3 & 4 & 5 & 6 & 7 & 8 & 9 \\
            0 & 6 & 1 & 2 & 3 & 4 & 5 & 0 & 7 & 8 & 9 \\
            1 & 1 & 0 &   & 2 &   &   &   &   &   &   \\
            2 & 2 &   & 0 & 1 &   &   &   &   &   &   \\
            3 & 3 & 2 & 1 & 0 &   &   &   &   &   &   \\
            4 & 4 &   &   &   & 0 & 1 & 2 &   &   &   \\
            5 & 5 & $\times$ & $\times$ & $\times$ & 1 & 0 & $\times$ & |[color=red]| 2 &   &   \\
            6 & 0 &   &   &   &   &   & 1 &   &   &   \\
            7 & 7 &   &   &   &   &   &   & 0 & 1 &   \\
            8 & 8 &   &   &   &   &   &   & 1 & 0 &   \\
            9 & 9 &   &   &   &   &   &   &   &   & 0 \\
        };
        \draw[->] (-1.58, 0.46)--(-1.58, 0);
        \draw[->] (-1.07, 1.86)--(-1.07, 0);
        \draw[->] (-0.56, 1.4)--(-0.56, 0);
        \draw[->] (0.76, 0.1)--(0.76, -0.1);

        \draw (-2.28, 2.75)--(-2.28, -2.35);
        \draw (-2.63, 2.37)--(2.3, 2.37);
        
    \end{tikzpicture}
    \caption{Placing a 2 in row 5 of the center=6 array}
    \label{fig:enter-label}
\end{figure}

\begin{remark}
    Abrams and Cowen-Morton study a family of arrays whose interiors are filled in using the same rules, but whose leading row and column go $n, 0, 1, 2, ...$ instead of $n, 1, 2, ..., n-1, 0, n+1, ...$ \cite{alg-arrays, colorful-arrays, locator-theorem}. These arrays share many of the same properties as those deriving from tripod nim. In particular, the proof of the periodicity theorem for tree nim is a direct generalization of the proof of theorem 4 in \cite{colorful-arrays} to higher dimensions.
\end{remark}

The other way is to fill in row by row. For this method, simply fill each entry with the \textit{minimal excluded value}, or \textbf{mex}, of the entries directly to its left and above it. For example, in table 2, the entry at $(2, 7)$ is 5 because 5 is the \textbf{mex} of the set $\{0, 1, 2, 3, 4, 6, 7, 8\}$, which is the union of $\{2, 3, 0, 1, 6, 7, 4\}$ and $\{7, 8\}$.

\begin{figure}[!htb]
    \centering
    \begin{tikzpicture}
        \matrix at (-0.2, 1.2) [matrix of nodes] 
        {
              & 0 & 1 & 2 & 3 & 4 & 5 & 6 & 7 & 8 & 9 \\
            0 & 6 & 1 & 2 & 3 & 4 & 5 & 0 & 7 & 8 & 9 \\
            1 & 1 & 0 & 3 & 2 & 5 & 4 & 6 & 8 & 7 & 10 \\
            2 & 2 & 3 & 0 & 1 & 6 & 7 & 4 &|[color=red]|5&&\\
            3 & 3 &   &   &   &   &   &   &   &   &   \\
            4 & 4 &   &   &   &   &   &   &   &   &   \\
            5 & 5 &   &   &   &   &   &   &   &   &   \\
        };
        \draw[thin] (-2.1, 1.37) rectangle (0.73, 0.97);
        \draw[thin] (0.78, 2.32) rectangle (1.13, 1.4);

        \draw (-2.63, 2.35)--(2.3, 2.35);
        \draw (-2.15, 2.8)--(-2.15, -0.4);
        
    \end{tikzpicture}
    \caption{Computing $(2, 7)$ in the center$=6$ array}
    \label{fig:enter-label}
\end{figure}

\begin{remark}
    The two above approaches are in fact more or less the same. The difference is one of perspective - the 2-D array is the projection of a 3-D lattice representation onto one of the three axis planes. The two methods correspond to the same process, but viewed with different projection axes.
\end{remark}

\FloatBarrier

\section{Equivalences}

There are certain equivalences between tripod nim worlds with different centers. This can be used to give a full analysis for an infinite family of centers, called the trivial centers.

\subsection{Trivial Centers}

Recall from the proof of the mis\`ere nim analysis that $n$-stack mis\`ere nim is essentially tree nim, where the tree has $n$ leaves protruding from a central vertex of size 1. In particular, tripod nim with center $=1$ is 3-stack mis\`ere nim, and thus has the same $\mathcal{P}$ positions as normal nim, except for when all the leaves are 1.

1 is not the only tripod nim center value for which the $\mathcal{P}$ positions are nearly identical to those of nim. There are similar triviality results for all center values of the form $2^k-1$.

\begin{theorem}[Analysis of Trivial Centers]
    Let $n=2^k-1$ be one less than a power of two. Then, the tripod nim position $(n, (a, b, c))$ is $\mathcal{P}$ if and only if $a \oplus b \oplus c = 0$ or $a=b=c=n$.
\end{theorem}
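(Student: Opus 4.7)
The plan is to prove the claim by strong induction on $a+b+c$, directly checking that the $\mathcal{P}$ positions of the tripod nim world with center $n = 2^k - 1$ coincide with
$$P^* := \bigl\{(a,b,c) : a,b,c > 0,\ a \oplus b \oplus c = 0,\ (a,b,c) \neq (n,n,n)\bigr\} \cup \bigl\{(n,n,n)\bigr\}.$$
The outcomes on the exterior are supplied by the $3$-stack end-nim result cited at the start of the section: $(b, n, c)$ is $\mathcal{P}$ precisely when $b = c \neq n$. In parallel I would show that (i) every move from a position in $P^*$ lands in $\mathcal{N}$, and (ii) every interior position outside $P^*$ has a move either into $P^*$ or into an exterior end-nim $\mathcal{P}$ position.

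For direction (i), if $(a,b,c) \in P^*$ has $a \oplus b \oplus c = 0$, the standard nim verification goes through: any move changes the XOR to a nonzero value, so the resulting interior position is not in the XOR-zero part of $P^*$, and cannot coincide with $(n,n,n)$ since that point has XOR $n \neq 0$; and if a stack is reduced to $0$, the two remaining leaves cannot be equal, so the exterior landing is end-nim $\mathcal{N}$. For the exceptional point $(n,n,n)$, every move produces $(n,n,k)$ with $0 \le k < n$, which is either the exterior position corresponding to end-nim $(n,n,n)$ (which is $\mathcal{N}$ because the two outer stacks equal the center), or an interior position with XOR $k \neq 0$ and distinct from $(n,n,n)$, hence $\mathcal{N}$ by the inductive hypothesis.

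For direction (ii), let $(a,b,c)$ be an interior position with $s := a \oplus b \oplus c \neq 0$. The standard nim reduction, applied to whichever stack shares the leading bit of $s$, yields a XOR-zero position; if the reduced coordinate is still positive, that position lies in $P^*$ and we are done. Otherwise the nim move reduces, say, $a$ all the way to $0$, which forces $b = c$, and we pass to the exterior $(0, b, b)$: if $b \neq n$ this is end-nim $\mathcal{P}$ and we are done. The genuinely delicate case is $b = c = n$ with $a \neq n$, where the standard nim move lands in the $\mathcal{N}$ exterior $(0,n,n)$. Here I would split on the size of $a$: if $a > n$, reduce $a$ directly to $n$, landing at $(n,n,n) \in P^*$; if $a < n$, use that for $n = 2^k - 1$ one has the carry-free identity $n \oplus a = n - a$ for all $0 \le a \le n$, so we may legitimately reduce one of the two $n$-leaves from $n$ to $n - a$, producing $(a, n - a, n)$ with XOR zero, all coordinates positive and distinct from $(n,n,n)$, hence in $P^*$.

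The main obstacle is precisely this last sub-case, which is also the only place where the hypothesis $n = 2^k - 1$ is actually used: without the carry-free identity $n \oplus a = n - a$ the alternative move would increase a stack and thus be illegal, and for generic $n$ the positions $(a, n, n)$ with $0 < a < n$ are genuinely not captured by nim-sum analysis. Everything else is routine bookkeeping to check that the inductive hypothesis applies, that the boundary cases do not accidentally force an interior move into $(n,n,n)$, and that the end-nim analysis cited above gives the correct outcome class at every exterior landing.
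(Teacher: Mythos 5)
Your proposal is correct, but it takes a genuinely different route from the paper. The paper proves the theorem with the barrier machinery of Section 3: it first builds a partial isomorphism of barriers between the 3-stack nim world and the center-$n$ tripod world on the region where all leaves are at most $n$ (matching shadowed rays via the end-nim analysis of the exterior), then separately proves that $(n,(n,n,n))$ is $\mathcal{P}$ exactly when $n=2^k-1$, and finally extends to positions with a leaf exceeding $n$ by a second barrier isomorphism. You instead verify the candidate $\mathcal{P}$-set directly by the standard two-part argument (no move stays inside the set; every outside position has a move into it), with induction on $a+b+c$. Your version is more elementary and self-contained, at the cost of redoing the nim XOR computations; the paper's version imports the known solution of 3-stack nim wholesale and showcases the shadow/barrier tools. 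Both arguments ultimately rest on the same two special facts -- the end-nim outcomes of the exterior positions $(b,n,c)$, and the carry-free identity $n \oplus a = n-a$ for $n=2^k-1$, which you invoke in your ``delicate case'' and which the paper uses in the mirroring strategy of Figure 10. One small repair: in direction (i), your stated reason that a XOR-zero position cannot move to $(n,n,n)$ (namely that $(n,n,n)$ has XOR $n \neq 0$) is a non sequitur, since the post-move position also has nonzero XOR; the correct observation is that the only positions seeing $(n,n,n)$ are of the form $(a,n,n)$ with $a>n$, whose XOR equals $a \neq 0$, so none of them lies in the XOR-zero part of $P^*$. This is a one-line fix and does not affect the soundness of the argument.
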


\begin{proof}

Let $W$ be the world with 3-stack nim as its interior, and let $V$ be the world with center $n$ tripod nim as its interior. Construct barriers $B$ and $C$, whose upper sets $L_B=\partial W$ and $L_C=\partial V$ are the exteriors of their respective worlds. Let $$M_B = l_W^{-1}([1, n-1] \times [1, n-1] \times [1, n])$$ be a middle set for $B$. Define $M_C$ similarly, so that $l_W(M_B)=l_V(M_C)$. Provided all rays which intersect them have the same shadowedness, there is a partial isomorphism relating $M_B$ and $M_C$. 

Let $R\subset W$ be a ray which intersects $M_B$. It intersects the upper set only at some 2-stack nim position $(a, b)$, where $b < n$. This position is $\mathcal{P}$ if and only if $a=b$. The corresponding ray $R'=l^{*-1}_Vl^*_W(R) \subset V$ intersects the upper set at the 3-stack end nim position $(a, n, b)$. Because $b < n$, and in particular $b \neq n$, this position is $\mathcal{P}$ if and only if $a=b$ (using the analysis of 3-stack end nim). Thus, $R$ is shadowed if and only if $R'$ is shadowed, so there is a partial isomorphism equating $M_B$ and $M_C$.

By the partial barrier isomorphism theorem, it follows that the trivial center theorem holds when two leaves are strictly less than the center, and the third is less than or equal to the center. Notice that the above proof did not use the condition that the center is one less than a power of two. Indeed, this part of the result holds for \textit{any} center.

The next step is to show the result holds for all positions with leaves less than or equal to the center. All that's left are positions where two or all three leaves match the center. The set of such positions is the union of 3 rays, which intersect at the position $(n, (n, n, n))$. If this position can be shown to be $\mathcal{P}$, then by the uniqueness of $\mathcal{P}$ completions, it follows that everything else on these rays is $\mathcal{N}$. All the corresponding nim positions are also $\mathcal{N}$. Hence, it suffices to show $(n, (n, n, n))$ is $\mathcal{P}$.

\begin{prop}
    The tripod nim position with all stacks equal is $\mathcal{P}$ exactly when the common value is one less than a power of two.
\end{prop}

\begin{proof}

To see this, notice that the following response, if possible, results in a $\mathcal{P}$ position (by the first part of this proof). 

\FloatBarrier
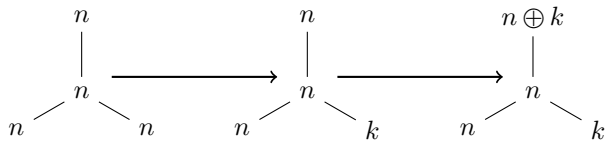
\begin{figure}[!h]
    \centering
    \begin{tikzpicture}
        \node (centerone) at (0, 0) {$n$};
        \node (highleafone) at (0, 1) {$n$};
        \node (leftleafone) at (-0.866, -0.5) {$n$};
        \node (rightleafone) at (0.866, -0.5) {$n$};
        \draw (centerone)--(highleafone);
        \draw (centerone)--(leftleafone);
        \draw (centerone)--(rightleafone);

        \node (centertwo) at (3, 0) {$n$};
        \node (highleaftwo) at (3, 1) {$n$};
        \node (leftleaftwo) at (2.134, -0.5) {$n$};
        \node (rightleaftwo) at (3.866, -0.5) {$k$};
        \draw (centertwo)--(highleaftwo);
        \draw (centertwo)--(leftleaftwo);
        \draw (centertwo)--(rightleaftwo);

        \node (centerthree) at (6, 0) {$n$};
        \node (highleafthree) at (6, 1) {$n \oplus k$};
        \node (leftleafthree) at (5.134, -0.5) {$n$};
        \node (rightleafthree) at (6.866, -0.5) {$k$};
        \draw (centerthree)--(highleafthree);
        \draw (centerthree)--(leftleafthree);
        \draw (centerthree)--(rightleafthree);

        \draw[thick, ->] (0.4, 0.2)--(2.6, 0.2);
        \draw[thick, ->] (3.4, 0.2)--(5.6, 0.2);
    \end{tikzpicture}
    \caption{From a starting position of all $n$'s. The first player reduces a leaf from $n$ to $k$. If possible, the move on another leaf from $n$ to $n \oplus k$ is winning.}
    \label{fig:enter-label}
\end{figure}
\FloatBarrier

For this response to be possible, one needs $n \oplus k < n$. When $n$ is one less than a power of two, its binary expansion is all 1's, so for any lesser non-zero\footnote{No smart player would ever completely remove a leaf, because that would yield the 3-stack end nim position $(n, n, n)$, which is $\mathcal{N}$.} value of $k$, nim-summing $n$ with $k$ will just flip some 1's to 0's. This means that for $n$ one less than a power of two, $(n, (n, n, n))$ is $\mathcal{P}$. If $n$ is not one less than a power of two, then there is always at least one value of $k$ such that $n \oplus k > n$. One example is $k = n \oplus (2^{\lfloor \log_2 n \rfloor + 1} - 1)$. The first player can win by moving to the smallest such value of $k$, so $(n, (n, n, n))$ is an $\mathcal{N}$ position for all $n$ not one less than a power of two.

\end{proof}

With all small positions now covered, a barrier isomorphism can be used to finish the proof. Let $B'$ be a barrier of $W$ with upper set consisting of the exterior, along with interior positions for which every stack has size at most $n$. Let $C'$ be the corresponding barrier of $V$. I claim corresponding rays $R$ and $R'$ which intersect the lower sets have the same shadowedness. There are three cases to consider.

\begin{enumerate}
    \item $R$ intersects $U_{B'}$ at a single position $(a, b)$, with $a>n$ (WLOG). Then, $R'$ intersects $U_{C'}$ at the position $(a, n, b)$, with $a>n$. Each position is $\mathcal{P}$ if and only if $a=b$, so $R$ is shadowed if and only if $R'$ is shadowed.
    \item $R$ contains $(n, n, n)$. Then, $R \cap U_{B'}$ contains the 2-stack nim position $(n, n)$, and thus $R$ is shadowed. $R' \cap U_{C'}$ contains the tripod nim position $(n, (n, n, n))$ and thus $R'$ is also shadowed.
    \item $R \cap U_W = \{ (a, b, n) : n \leq c \}$ for some $a<c$, $b \leq c$. By the partial isomorphism at the beginning of this proof, one of these positions is $\mathcal{P}$ if and only if the corresponding tripod nim position is $\mathcal{P}$, so $R$ is shadowed if and only if $R'$ is shadowed. In fact, rays of this type are always shadowed.
\end{enumerate}

This establishes an isomorphism of barriers $B'$ and $C'$, establishing equivalence for positions where at least one leaf is greater than the center. This completes the proof.
    
\end{proof}

\subsection{The two-five equivalence}

Observe the below plot.

\begin{figure}[!h]
    \centering
    \includegraphics[scale=0.5]{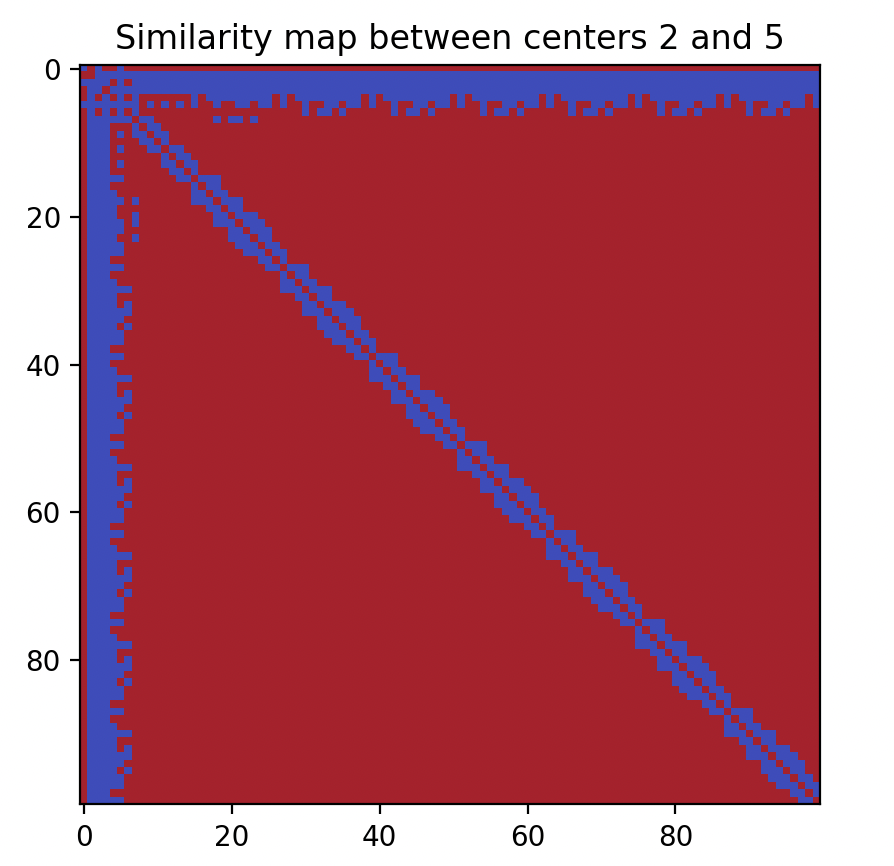}
    \caption{The above plot compares the $\mathcal{P}$-completion arrays for tripod nim with centers 2 and 5. A cell is colored red if its $\mathcal{P}$-completion is the same for both centers, and blue otherwise.}
    \label{fig:enter-label}
\end{figure}

It seems from the plot that aside from the first few rows and columns and a band around the diagonal, that the arrays for tripod nim with 2 and 5 in the center are the same. Indeed, this can be shown by constructing a barrier with lower set $[8, \infty) \times [8, \infty) \times [8, \infty)$ and casting a shadow for each center. Combining the guarantee of the periodicity result with a simple computational check, one can verify that the shadows are indeed identical. Consequently, if the leaves $a$, $b$, and $c$ each take value 8 or greater, then $(2, (a, b, c))$ is $\mathcal{P}$ if and only if $(5, (a, b, c))$ is $\mathcal{P}$.

Call two centers between which there is some barrier isomorphism \textit{nearly equivalent}. Near equivalence is an equivalence relation. Reflexivity and symmetry are trivial. To show transitivity, suppose centers $a$ and $b$ cast the same shadow on a barrier with lower set $L_{ab}$ and $b$ and $c$ cast the same shadow on a barrier with lower set $L_{bc}$. Then, $a$ and $c$ are both the same as $b$ on $L_{ab} \cap L_{bc}$ so they are the same as each other, and so are nearly equivalent, with isomorphic barriers with lower set $L_{ab} \cap L_{bc}$. 

The trivial centers are nearly equivalent to one another, as are 2 and 5. Seeing this, one may hope or conjecture a number of things. Perhaps this relation has finitely many equivalence classes, or perhaps every equivalence class has infinitely many members, or even just multiple. While these statements have not been proven false, there is no computational evidence to support them. Indeed, trying all pairs of integers up to 40, 2-5 seems to be the only near equivalence, other than the trivial centers.

\section{Periods}

Birays in tripod nim correspond to rows of the $\mathcal{P}$-completion array. Specifically, each row is the $\mathcal{P}$-completion sequence of a biray. The periodicity theorem says that the entries in these rows are arithmetico-periodic. What are the periods?

For trivial centers, the period of the $n$th row is the smallest power of two strictly greater than $n$. For non-trivial centers, the periods are much more chaotic. Table 3 lists the periods for the first several rows for small non-trivial centers. Note that these values are found by calculating the first several thousand terms, normally 100,000 to 200,000, and finding the smallest period for which the last half looks arithmetico-periodic. Thus, the larger values have not been rigorously proven correct, although they are strongly suspected to be.

\begin{table}[!p]
    \centering
    \begin{tabular}{c|c c c c c}
         & 2 & 4 & 6 & 8 & 9 \\
         \hline
         0 & 1 & 1 & 1 & 1 & 1 \\
         1 & 2 & 2 & 2 & 2 & 2 \\
         2 & 4 & 2 & 2 & 2 & 2 \\
         3 & 2 & 4 & 4 & 4 & 4 \\
         4 & 12 & 12 & 4 & 4 & 4 \\
         5 & 12 & 6 & 6 & 6 & 6 \\
         6 & 12 & 8 & 6 & 24 & 24 \\
         7 & 8 & 8 & 8 & 24 & 24 \\
         8 & 8 & 12 & 8 & 24 & 48 \\
         9 & 10 & 10 & 10 & 10 & 192 \\
         10 & 60 & 60 & 60 & 60 & 960 \\
         11 & 60 & 60 & 60 & 60 & 3840 \\
         12 & 84 & 84 & 84 & 84 & 26880 \\
         13 & 84 & 84 & 84 & 84 & ? \\
         14 & 84 & 588 & 84 & 84 & ? \\
         15 & 16 & 1764 & 16 & 16 & 16\\
         16 & 16 & 1764 & 16 & 16 & 16 \\
         17 & 18 & 18 & 18 & 18 & 18 \\
         18 & 180 & 180 & 180 & 180 & 180 \\
         19 & 20 & 180 & 180 & 180 & 180 \\
         20 & 900 & 1260 & 1980 & 1980 & 1620\\
         21 & 7200 & 10080 & 1980 & 9990 & ? \\
         22 & 7200 & 10080 & 1980 & 9990 & ? \\
         23 & 24 & 24 & 24 & 24 & 24\\
         24 & 24 & 24 & 312 & 24 & 312 \\
         25 & 26 & 26 & 312 & 26 & 312 \\
         26 & 364 & 364 & 2184 & 364 & 3432 \\
         27 & 364 & 364 & 2184 & 364 & ? \\
         28 & 2184 & 2548 & 6552 & 4732 & ? \\
         29 & ? & 35672 & ? & ? & ? \\
         30 & ? & 35672 & ? & ? & ? \\
         31 & 32 & 32 & 32 & 32 & 32 \\
         32 & 32 & 544 & 544 & 32 & 32 \\
         33 & 34 & 544 & 544 & 34 & 34 \\
         34 & 612 & 544 & 544 & 612 & 612 \\
         35 & 612 & 39168 & 39168 & 612 & 612 \\
         36 & 4896 & ? & ? & 5508 & 6732 \\
         37 & ? & ? & ? & ? & ? \\
         38 & ? & ? & ? & ? & ? \\
         39 & 40 & 40 & 40 & 40 & 40 \\
    \end{tabular}
    \quad
    \begin{tabular}{c|c c c c c c}
         & 2 & 4 & 6 & 8 & 9 \\
         \hline
         40 & 40 & 840 & 840 & 840 & 40 \\
         41 & 42 & 840 & 840 & 840 & 42 \\
         42 & 924 & 8400 & 15120 & 15960 & 924 \\
         43 & 924 & ? & ? & ? & 924 \\
         44 & 21252 & ? & ? & ? & 9240 \\
         45 & ? & ? & ? & ? & ? \\
         46 & ? & ? & ? & ? & ? \\
         47 & 48 & 48 & 48 & 48 & 48 \\
         48 & 48 & 48 & 1200 & 1200 & 48 \\
         49 & 50 & 50 & 1200 & 1200 & 50 \\
         50 & 1300 & 1300 & 1200 & 18000 & 1300 \\
         51 & 1300 & 1300 & ? & ? & 1300 \\
         52 & 32500 & 32500 & ? & ? & 16900 \\
         53 & ? & ? & ? & ? & ? \\
         54 & ? & ? & ? & ? & ? \\
         55 & 56 & 56 & 56 & 56 & 56 \\
         56 & 1624 & 1624 & 1624 & 1624 & 56 \\
         57 & 1624 & 1624 & 1624 & 1624 & 58 \\
         58 & 43848 & 47096 & 47096 & 22736 & 1740 \\
         59 & ? & ? & ? & ? & 1740 \\
         60 & ? & ? & ? & ? & ? \\
         61 & ? & ? & ? & ? & ? \\
         62 & ? & ? & ? & ? & ? \\
         63 & 64 & 64 & 64 & 64 & 64 \\
         64 & 64 & 2112 & 2112 & 2112 & 2112 \\
         65 & 66 & 2112 & 2112 & 2112 & 2112 \\
         66 & 2244 & 35904 & ? & ? & ? \\
         67 & 2244 & ? & ? & ? & ? \\
         68 & ? & ? & ? & ? & ? \\
         69 & ? & ? & ? & ? & ? \\
         70 & ? & ? & ? & ? & ? \\
         71 & ? & 72 & 72 & ? & 27 \\
         72 & ? & 2664 & 2664 & ? & 2664 \\
         73 & 74 & 2664 & 2664 & 74 & 2664 \\
         74 & 2812 & ? & ? & 2812 & ? \\
         75 & 2812 & ? & ? & 2812 & ? \\
         76 & ? & ? & ? & ? & ? \\
         77 & ? & ? & ? & ? & ? \\
         78 & ? & ? & ? & ? & ? \\
         79 & ? & ? & ? & ? & ? \\
    \end{tabular}
    
    \caption{Calculated periods for rows of tripod nim arrays for non-trivial centers between 2 and 9. Center $= 5$ is ommited because it is the same as center $= 2$ except for rows 1 and 2. Question marks indicate unknown values.}
    \label{tab:my_label}
\end{table}

Notice that with rare and small exceptions, whenever the row index is of the form $8n-1$, the corresponding period is $8n$. Additionally, rows $8n$ and $8n+1$ often have periods $8n$ and $8n+2$ respectively. Generally, when they do, row $8n+2$ has period $2(4n+1)(4n+2)$. When they don't, row $8n$ has period $2(4n)(4n+1)$. How should one go about making sense of these patterns?

Recall that by the symmetry between the leaves in tripod nim, the entries $n$th row correspond to the appearance of the value $n$ throughout the array. In particular, $n$ appears at the intersection of column $k$ and row $r$ if and only if $k$ appears as the $r$-index entry in row $n$ (See remark 6.1). Thus, we can investigate the period of the $n$th row by examining the arrangement of the appearances of the number $n$ in the array. These are largely determined by the arrangement of all those numbers less than $n$. So, a natural starting point for investigation is to plot the numbers less than $8n-1$ for various centers and values of $n$ to get a handle on what's going on. Experimentally, it seems that with a few small exceptions, after a certain point the numbers from 0 to $8n-2$ form a perfect band hugging the diagonal. Whenever this happens, it's provably guaranteed that $8n-1$ will have period $8n$, or an even divisor thereof (evenness necessitated by the symmetry of the array). The cleanest proof requires the construction of some machinery, which takes place in the next section.

\begin{figure}[!ht]
    \centering
    \includegraphics[scale=0.5]{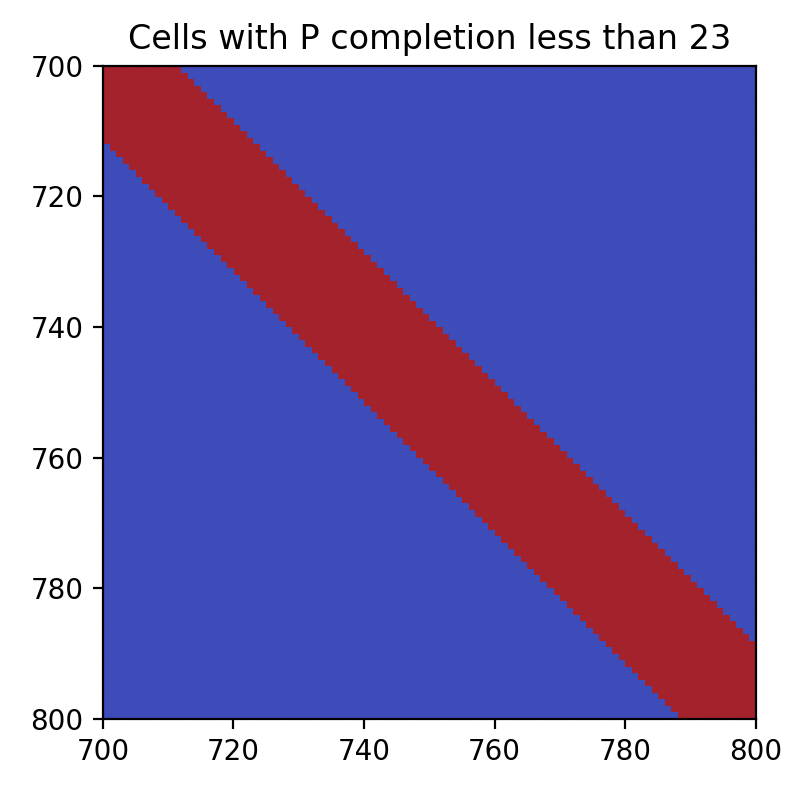}
    \caption{This image comes from the array corresponding to center 6. Cells are colored red if their $\mathcal{P}$ completions are strictly less than 23, and blue otherwise. Notice the axes; this behavior only occurs adequately far from the origin. Near the origin, the plot is much more chaotic.}
    \label{fig:enter-label}
\end{figure}

\FloatBarrier

\begin{remark}
    \textit{Winning Ways}\cite{winning-ways} invites us to consider the game "Third One's Lucky", in which players play nim until there are exactly two coins left, at which point the player to move loses. This game is equivalent to tripod nim with center $=2$, and the book states the $\mathcal{P}$-completion lemma and the periodicity theorem for this special case. It lists the first few periods, which appear in the OEIS as A006018.\cite{third-one-lucky} Analogous sequences for tripod nim with greater non-trivial centers do not return any OEIS results, at the time of writing.
\end{remark}

\section{Distilling to a Dynamical System}

\subsection{Deriving $D(1, n)$}

\begin{example}

Let's explore what the proof of the periodicity theorem looks like for a specific case of tripod nim. Consider the tripod nim world corresponding to center $=2$. The 5th row of the completion array corresponds to a certain biray $B$ - the same one (after a permutation of vertices) as is represented by the set of 5's in the $\mathcal{P}$-completion array. The birays seen by $B$ correspond to the set of 4's, set of 3's, set of 2's, 1's, and 0's. Suppose we've determined each of those satisfies the periodicity theorem. Then, constructing the collection of barriers prescribed in the periodicity proof, we can try to fill in the 5's below each barrier using the procedure outlined in figure 8.

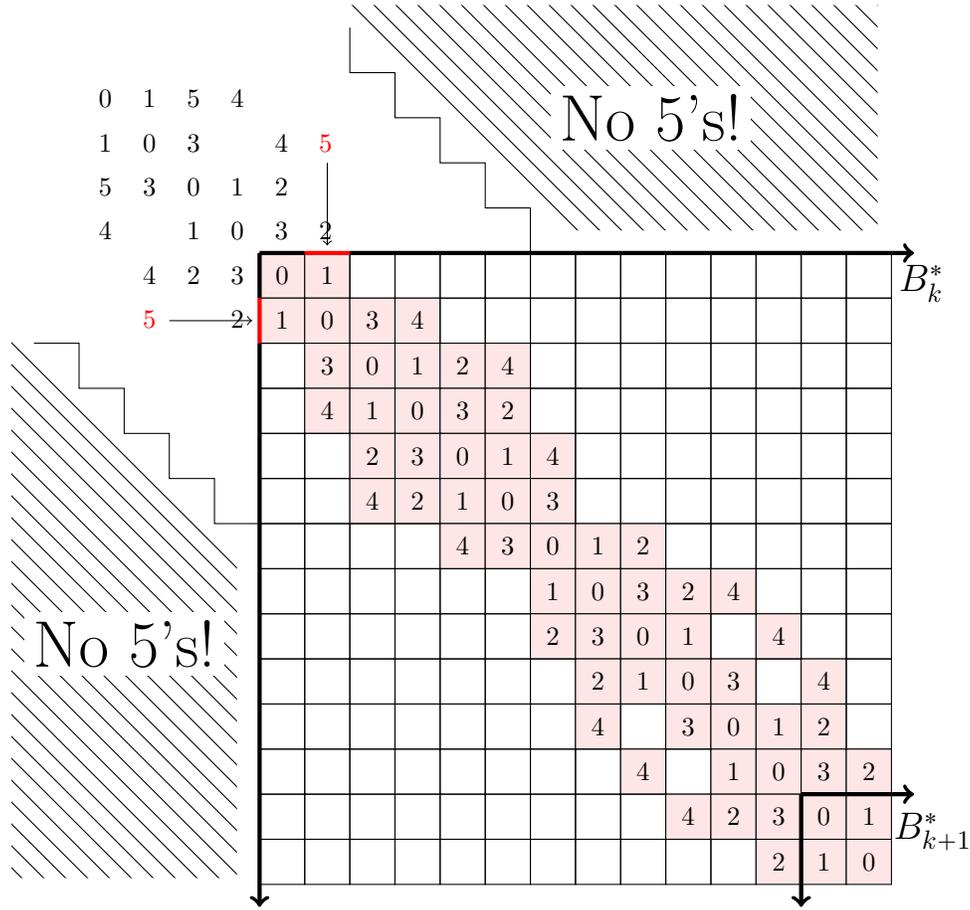
\begin{figure}[!htb]
    \centering
    \begin{tikzpicture}[ne/.style={fill=red!10}]
        \matrix (main) at (0, 0) [matrix of nodes, nodes in empty cells, nodes={rectangle, minimum width=0.6cm, minimum height=0.6cm, anchor=center, draw}, row sep=-\pgflinewidth, column sep=-\pgflinewidth] {
            |[ne]| 0 & |[ne]| 1 & & & & & & & & & & & & \\
            |[ne]| 1 & |[ne]| 0 & |[ne]| 3 & |[ne]| 4 & & & & & & & & & & \\
            & |[ne]| 3 & |[ne]| 0 & |[ne]| 1 & |[ne]| 2 & |[ne]| 4 &&&&&&&&\\
            & |[ne]| 4 & |[ne]| 1 & |[ne]| 0 & |[ne]| 3 & |[ne]| 2 &&&&&&&&\\
            && |[ne]| 2 & |[ne]| 3 & |[ne]| 0 & |[ne]| 1 & |[ne]| 4 &&&&&&&\\
            && |[ne]| 4 & |[ne]| 2 & |[ne]| 1 & |[ne]| 0 & |[ne]| 3 &&&&&&&\\
            &&&& |[ne]| 4 & |[ne]| 3 & |[ne]| 0 & |[ne]| 1 & |[ne]| 2 &&&&&\\
            &&&&&& |[ne]| 1 & |[ne]| 0 & |[ne]| 3 & |[ne]| 2 & |[ne]| 4 &&&\\
            &&&&&& |[ne]| 2 & |[ne]| 3 & |[ne]| 0 & |[ne]| 1 & & |[ne]| 4 &&\\
            &&&&&&& |[ne]| 2 & |[ne]| 1 & |[ne]| 0 & |[ne]| 3 & & |[ne]| 4 &\\
            &&&&&&& |[ne]| 4 & & |[ne]| 3 & |[ne]| 0 & |[ne]| 1 & |[ne]| 2 &\\
            &&&&&&&& |[ne]| 4 & & |[ne]| 1 & |[ne]| 0 & |[ne]|3 & |[ne]| 2 \\
            &&&&&&&&& |[ne]| 4 & |[ne]| 2 & |[ne]| 3 & |[ne]| 0 & |[ne]| 1 \\
            &&&&&&&&&&& |[ne]| 2 & |[ne]| 1 & |[ne]| 0 \\
        };
        
        \draw[ultra thick] (-4.2, 4.2)--(-4.2, 3.6);
        \draw[ultra thick, color=red] (-4.2, 3.6)--(-4.2, 3.0);
        \draw[ultra thick, ->] (-4.2, 3.0)--(-4.2, -4.5);
        \draw[ultra thick] (-4.2, 4.2)--(-3.6, 4.2);
        \draw[ultra thick, color=red] (-3.6, 4.2)--(-3, 4.2);
        \draw[ultra thick, ->] (-3, 4.2)--(4.5, 4.2);
        \node at (4.6, 3.8) {\Large $B^*_k$};

        \draw[ultra thick, ->] (3, -3)--(3, -4.5);
        \draw[ultra thick, ->] (3, -3)--(4.5, -3);
        \node at (4.75, -3.5) {\Large $B^*_{k+1}$};

        \draw (-0.6, 4.2)--(-0.6, 4.8)--(-1.2, 4.8)--(-1.2, 5.4)--(-1.8, 5.4)--(-1.8, 6)--(-2.4, 6)--(-2.4, 6.6)--(-3, 6.6)--(-3, 7.2);
        \draw (-4.2, 0.6)--(-4.8, 0.6)--(-4.8, 1.2)--(-5.4, 1.2)--(-5.4, 1.8)--(-6, 1.8)--(-6, 2.4)--(-6.6, 2.4)--(-6.6, 3)--(-7.2, 3);

        \node
        [
        trapezium, 
        trapezium left angle=125, 
        trapezium right angle=90, 
        trapezium stretches=true,
        minimum height=3cm,
        minimum width=7.2cm,
        pattern=south east lines
        ] at (1.9, 6) {};
        \node[fill=white] at (1, 6) {\Huge No 5's!};

        \node
        [
        trapezium, 
        trapezium left angle=90, 
        trapezium right angle=125, 
        trapezium stretches=true,
        minimum height=3cm,
        minimum width=7.2cm,
        rotate=90,
        pattern=south east lines
        ] at (-6, -2) {};
        \node[fill=white] at (-6, -1) {\Huge No 5's!};

        \matrix (upper) at (-4.2, 4.2) [matrix of nodes, nodes in empty cells, nodes={minimum width=0.6cm, minimum height=0.6cm, anchor=center}, row sep=-\pgflinewidth, column sep=-\pgflinewidth] {
            0 & 1 & 5 & 4 &   &   &   &   \\
            1 & 0 & 3 &   & 4 & |[color=red]| 5 &   &   \\
            5 & 3 & 0 & 1 & 2 &   &   &   \\
            4 &   & 1 & 0 & 3 & 2 &   &   \\
              & 4 & 2 & 3 &   &   &   &   \\
              & |[color=red]| 5 &   & 2 &   &   &   &   \\
              &   &   &   &   &   &   &   \\
              &   &   &   &   &   &   &   \\
        };

        \draw[->] (-5.4, 3.3)--(-4.3, 3.3);
        \draw[->] (-3.3, 5.4)--(-3.3, 4.3);
    \end{tikzpicture}
    \caption{This is a visualization of $B^*_k$ looking down from above, for the specific case of center=2 and $n=5$. The light red cells are shadowed from above, with the numbers in those cells telling which layer has a $\mathcal{P}$ position there. The second row and second column are also shadowed, as indicated by the red segments. Only the first 5 rows or columns could possibly be shadowed, as 5's cannot appear far enough off the diagonal to cast shadows anywhere else. Because the lcm of the periods of 0 to 4 with center 2 is 12, $B^*_{k+1}$ begins 12 rows down from $B^*_k$.}
    \label{fig:enter-label}
\end{figure}

We see that the distribution of the 5's below $B^*_k$ is determined by the distribution of the smaller numbers, as well as by the shadows cast by the earlier 5's. In this case there are only 6 possibly shadowed cells on the upper boundary, and the left boundary matches by symmetry. This leaves 64 possible "seeds" which determine the distribution of 5's below $B^*_k$, guaranteeing an eventual collision.

The distribution of $n$ below each barrier is determined by the corresponding "seed" which is found by reading off the shadow cast on the top edge of the barrier. For example, the seed for $B^*_k$ is 010000, with 0's representing unshadowed segments and 1's representing shadowed ones.

\end{example}

Given the seed for a barrier, and the distribution of the smaller numbers, there is sufficient information to calculate the seed of the next barrier. Thus, the seeds form a dynamical system. 

The key insight relating to the above example is that the necessary condition between the $B^*_k$'s is that the smaller numbers \textit{collectively} fill out the same pattern below each barrier. Taking the lcm of the smaller numbers' periods ensures each individual number fills out the same pattern. This certainly gets the job done, but it is sometimes possible to get away with much smaller gaps between the $B^*_k$'s. In the extreme case, when the numbers less than $n$ form a perfect band hugging the diagonal, we can take successive barriers which are shifts 1 unit down from one another.

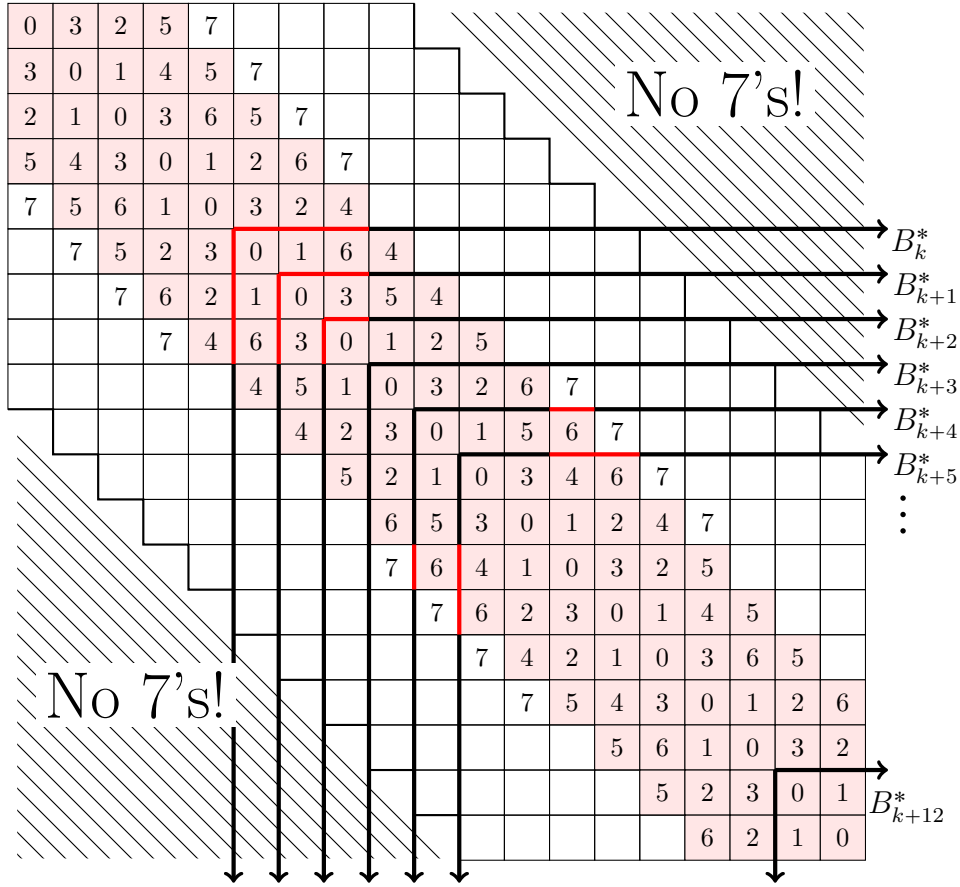
\begin{figure}[!htb]
    \centering
    \begin{tikzpicture}[ne/.style={fill=red!10, draw}, e/.style={draw}]
        \matrix (main) at (-1.5, 1.5) [matrix of nodes, nodes in empty cells, nodes={rectangle, minimum width=0.6cm, minimum height=0.6cm, anchor=center}, row sep=-\pgflinewidth, column sep=-\pgflinewidth] {
            |[ne]| 0 & |[ne]| 3 & |[ne]| 2 & |[ne]| 5 & |[e]|7&|[e]|&|[e]|&|[e]|&|[e]|&&&&&&&&&& \\
            |[ne]| 3 & |[ne]| 0 & |[ne]| 1 & |[ne]| 4 & |[ne]| 5 &|[e]|7&|[e]|&|[e]|&|[e]|&|[e]|&&&&&&&&& \\
            |[ne]| 2 & |[ne]| 1 & |[ne]| 0 & |[ne]| 3 & |[ne]| 6 & |[ne]| 5 &|[e]|7&|[e]|&|[e]|&|[e]|&|[e]|&&&&&&&& \\
            |[ne]| 5 & |[ne]| 4 & |[ne]| 3 & |[ne]| 0 & |[ne]| 1 & |[ne]| 2 & |[ne]| 6 &|[e]|7&|[e]|&|[e]|&|[e]|&|[e]|&&&&&&& \\
            |[e]|7& |[ne]| 5 & |[ne]| 6 & |[ne]| 1 & |[ne]| 0 & |[ne]| 3 & |[ne]| 2 & |[ne]| 4 &|[e]|&|[e]|&|[e]|&|[e]|&|[e]|&&&&&& \\
            |[e]|&|[e]|7& |[ne]| 5 & |[ne]| 2 & |[ne]| 3 & |[ne]| 0 & |[ne]| 1 & |[ne]| 6 & |[ne]| 4 &|[e]|&|[e]|&|[e]|&|[e]|&|[e]|&&&&& \\
            |[e]|&|[e]|&|[e]|7& |[ne]| 6 & |[ne]| 2 & |[ne]| 1 & |[ne]| 0 & |[ne]| 3 & |[ne]| 5 & |[ne]| 4 &|[e]|&|[e]|&|[e]|&|[e]|&|[e]|&&&& \\
            |[e]|&|[e]|&|[e]|&|[e]|7& |[ne]| 4 & |[ne]| 6 & |[ne]| 3 & |[ne]| 0 & |[ne]| 1 & |[ne]| 2 & |[ne]| 5 &|[e]|&|[e]|&|[e]|&|[e]|&|[e]|&&&\\
            |[e]|&|[e]|&|[e]|&|[e]|&|[e]|& |[ne]| 4 & |[ne]| 5 & |[ne]| 1 & |[ne]| 0 & |[ne]| 3 & |[ne]| 2 & |[ne]| 6 &|[e]|7&|[e]|&|[e]|&|[e]|&|[e]|&&\\
            &|[e]|&|[e]|&|[e]|&|[e]|&|[e]|& |[ne]| 4 & |[ne]| 2 & |[ne]| 3 & |[ne]| 0 & |[ne]| 1 & |[ne]| 5 & |[ne]| 6 &|[e]|7&|[e]|&|[e]|&|[e]|&|[e]|&\\
            &&|[e]|&|[e]|&|[e]|&|[e]|&|[e]|& |[ne]| 5 & |[ne]| 2 & |[ne]| 1 & |[ne]| 0 & |[ne]| 3 & |[ne]| 4 & |[ne]| 6 &|[e]|7&|[e]|&|[e]|&|[e]|&|[e]|\\
            &&&|[e]|&|[e]|&|[e]|&|[e]|&|[e]|& |[ne]| 6 & |[ne]| 5 & |[ne]| 3 & |[ne]| 0 & |[ne]| 1 & |[ne]| 2 & |[ne]| 4 &|[e]|7&|[e]|&|[e]|&|[e]|\\
            &&&&|[e]|&|[e]|&|[e]|&|[e]|&|[e]|7& |[ne]| 6 & |[ne]| 4 & |[ne]| 1 & |[ne]| 0 & |[ne]| 3 & |[ne]| 2 & |[ne]| 5 &|[e]|&|[e]|&|[e]|\\
            &&&&&|[e]|&|[e]|&|[e]|&|[e]|&|[e]|7& |[ne]| 6 & |[ne]| 2 & |[ne]| 3 & |[ne]| 0 & |[ne]| 1 & |[ne]| 4 & |[ne]| 5 &|[e]|&|[e]|\\
            &&&&&&|[e]|&|[e]|&|[e]|&|[e]|&|[e]|7& |[ne]| 4 & |[ne]| 2 & |[ne]| 1 & |[ne]| 0 & |[ne]| 3 & |[ne]| 6 & |[ne]| 5 &|[e]|\\
            &&&&&&&|[e]|&|[e]|&|[e]|&|[e]|&|[e]|7& |[ne]| 5 & |[ne]| 4 & |[ne]| 3 & |[ne]| 0 & |[ne]| 1 & |[ne]| 2 & |[ne]| 6 \\
            &&&&&&&&|[e]|&|[e]|&|[e]|&|[e]|&|[e]|& |[ne]| 5 & |[ne]| 6 & |[ne]| 1 & |[ne]| 0 & |[ne]|3 & |[ne]| 2 \\
            &&&&&&&&&|[e]|&|[e]|&|[e]|&|[e]|&|[e]|& |[ne]| 5 & |[ne]| 2 & |[ne]| 3 & |[ne]| 0 & |[ne]| 1 \\
            &&&&&&&&&&|[e]|&|[e]|&|[e]|&|[e]|&|[e]|& |[ne]| 6 & |[ne]| 2 & |[ne]| 1 & |[ne]| 0 \\
        };
        
        \draw[ultra thick, color=red] (-4.2, 4.2)--(-4.2, 2.4);
        \draw[ultra thick, ->] (-4.2, 2.4)--(-4.2, -4.5);
        \draw[ultra thick, color=red] (-4.2, 4.2)--(-2.4, 4.2);
        \draw[ultra thick, ->] (-2.4, 4.2)--(4.5, 4.2);
        \node at (4.8, 4) {\large $B^*_k$};

        \draw[ultra thick, color=red] (-3.6, 3.6)--(-3.6, 2.4);
        \draw[ultra thick, ->] (-3.6, 2.4)--(-3.6, -4.5);
        \draw[ultra thick, color=red] (-3.6, 3.6)--(-2.4, 3.6);
        \draw[ultra thick, ->] (-2.4, 3.6)--(4.5, 3.6);
        \node at (5, 3.4) {\large $B^*_{k+1}$};

        \draw[ultra thick, color=red] (-3, 3)--(-3, 2.4);
        \draw[ultra thick, ->] (-3, 2.4)--(-3, -4.5);
        \draw[ultra thick, color=red] (-3, 3)--(-2.4, 3);
        \draw[ultra thick, ->] (-2.4, 3)--(4.5, 3);
        \node at (5, 2.8) {\large $B^*_{k+2}$};

        \draw[ultra thick, ->] (-2.4, 2.4)--(-2.4, -4.5);
        \draw[ultra thick, ->] (-2.4, 2.4)--(4.5, 2.4);
        \node at (5, 2.2) {\large $B^*_{k+3}$};

        \draw[ultra thick] (-1.8, 1.8)--(-1.8, 0);
        \draw[ultra thick, color=red] (-1.8, 0)--(-1.8, -0.6);
        \draw[ultra thick, ->] (-1.8, -0.6)--(-1.8, -4.5);
        \draw[ultra thick] (-1.8, 1.8)--(0, 1.8);
        \draw[ultra thick, color=red] (0, 1.8)--(0.6, 1.8);
        \draw[ultra thick, ->] (0.6, 1.8)--(4.5, 1.8);
        \node at (5, 1.6) {\large $B^*_{k+4}$};

        \draw[ultra thick] (-1.2, 1.2)--(-1.2, 0);
        \draw[ultra thick, color=red] (-1.2, 0)--(-1.2, -1.2);
        \draw[ultra thick, ->] (-1.2, -1.2)--(-1.2, -4.5);
        \draw[ultra thick] (-1.2, 1.2)--(0, 1.2);
        \draw[ultra thick, color=red] (0, 1.2)--(1.2, 1.2);
        \draw[ultra thick, ->] (1.2, 1.2)--(4.5, 1.2);
        \node at (5, 1) {\large $B^*_{k+5}$};

        \node[rotate=90] at (4.7, 0.4) {\Huge ...};

        \draw[ultra thick, ->] (3, -3)--(3, -4.5);
        \draw[ultra thick, ->] (3, -3)--(4.5, -3);
        \node at (4.75, -3.5) {\large $B^*_{k+12}$};

        \draw[thick] (4.2, 1.2)--(3.6, 1.2)--(3.6, 1.8)--(3, 1.8)--(3, 2.4)--(2.4, 2.4)--(2.4, 3)--(1.8, 3)--(1.8, 3.6)--(1.2, 3.6)--(1.2, 4.2)--(0.6, 4.2)--(0.6, 4.8)--(0, 4.8)--(0, 5.4)--(-0.6, 5.4)--(-0.6, 6)--(-1.2, 6)--(-1.2, 6.6)--(-1.8, 6.6)--(-1.8, 7.2);
        \draw[thick] (-1.2, -4.2)--(-1.2, -3.6)--(-1.8, -3.6)--(-1.8, -3)--(-2.4, -3)--(-2.4, -2.4)--(-3, -2.4)--(-3, -1.8)--(-3.6, -1.8)--(-3.6, -1.2)--(-4.2, -1.2)--(-4.2, -0.6)--(-4.8, -0.6)--(-4.8, 0)--(-5.4, 0)--(-5.4, 0.6)--(-6, 0.6)--(-6, 1.2)--(-6.6, 1.2)--(-6.6, 1.8)--(-7.2, 1.8);

        \node
        [
        isosceles triangle, 
        isosceles triangle apex angle=90, 
        rotate=45,
        minimum height=4cm,
        pattern=south east lines
        ] at (2.5, 5.4) {};
        \node[fill=white] at (2.2, 6) {\Huge No 7's!};

        \node
        [
        isosceles triangle, 
        isosceles triangle apex angle=90, 
        rotate=225,
        minimum height=4cm,
        pattern=south east lines
        ] at (-5.4, -2.5) {};
        \node[fill=white] at (-5.5, -2) {\Huge No 7's!};

    \end{tikzpicture}
    \caption{With center 2, adequately far from the origin, the numbers 0 to 6 form a perfect band hugging the diagonal. Thus, we can take successive barriers for $n=7$ to be shifted one unit down and right of one another. The shadows cast on the barriers are colored bright red.}
    \label{fig:enter-label}
\end{figure}

In such cases, the dynamical system formed by the seeds is relatively simple. Consider the system of seeds corresponding to the number $2n-1$, when all lesser numbers collectively form a perfect band.\footnote{This number is odd because because any perfect band must be made up of an odd number of integers. If a number has an odd number of lesser non-negative integers, it is odd.} To iterate from seed $s_k$ to the next, shift left, dropping leftmost bit, and fill in the rightmost bit with a zero. This accounts for all the shadows cast by values above the barrier $B^*_k$. The only thing left to consider is a $2n-1$ in the row below $B^*_k$ but above $B^*_{k+1}$. If the dropped bit is a 1, then this row has a $2n-1$ to the left of $B^*_k$, so nothing need be added. If the dropped bit is a 0, then there will be a $2n-1$ between the barriers. In accordance with the algorithm featured in figure 8, this $2n-1$ will occupy the first available spot. In summary, the iteration rule is as follows:
\begin{enumerate}
    \item Shift left, filling in with a 0
    \item If the dropped bit is a 0, replace the leftmost 0 which is not in the leftmost $n$ bits with a 1.
\end{enumerate}

\begin{defi}[$D(1, n)$]
    Call the above dynamical system $D(1, n)$.
\end{defi}

\begin{defi}[Radius of a band]
    Given a band which occupies $2n-1$ entries in each row, let $n$ be its radius.
\end{defi}

\subsection{Post-Band Periods}

Recall the claim at the end of section 7. It states that following a band with radius $n$, the next number has even period dividing $2n$. This is implied by the statement that all stable orbits of $D(1, n)$ have period $p\,|\,2n$.

\begin{theorem}[Post-band Period Theorem]
    For any initial state $s \in D(1, n)$, the system is periodic, with orbit dividing $2n$.
\end{theorem}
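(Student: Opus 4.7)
The plan is to show that every orbit of $D(1,n)$ eventually enters a cycle whose length divides $2n$. Since the state space is a finite set of bit strings, every trajectory is eventually periodic, so it suffices to bound the length of any stable cycle. I would begin by tracking individual $1$-bits: each one was either placed by the rule at some position $p \in \{n+1,\ldots,2n\}$ (when a $0$ was dropped), or it was inherited from an earlier state by shifting. Once at position $p$, a $1$-bit moves leftward by one position per step and is dropped when it reaches position $1$, so its lifetime is exactly $p$ steps. Every step that drops a $0$ creates exactly one new $1$-bit, and every step that drops a $1$ creates none.

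This bookkeeping already gives one easy invariant: on a cycle of length $k$, the numbers of $0$-drops and $1$-drops must balance (otherwise the total number of $1$-bits would drift), so $k$ is automatically even. Summing lifetimes over one period also yields $\sum_{i=1}^{k/2} p_i = k\bar N$, where $\bar N$ is the (constant) number of $1$-bits in each cycle state and the $p_i$ are the placement positions used during that period. This identity ties the average placement position to the average $1$-count and already rules out many candidate cycles.

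The main step is to show that the action of $T$ on any stable cycle is conjugate to a rotation on a cyclic group of order dividing $2n$. Concretely, I would construct an injection $\Phi$ from the states of a given cycle into $\mathbb{Z}/2n\mathbb{Z}$ satisfying $\Phi(T(s)) = \Phi(s)+1$, which immediately yields $T^{2n}=\mathrm{id}$ on the attractor and hence the period bound. A natural candidate is to let $\Phi(s)$ record the position of the next placement that $T$ will perform starting from $s$ (equivalently, the time elapsed since the most recent $0$-drop). The main obstacle, which I expect to absorb most of the technical work, is showing that such a $\Phi$ is well-defined and injective on every stable cycle. Because the greedy rule reads the entire state, one has to argue that on a stable cycle the right-hand portion of the state (positions $n+1,\ldots,2n$) is fully determined by the last $n$ drop events, so that the whole cycle state is encoded by at most $2n$ bits of drop history. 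Once that encoding is in place, the rotation structure of $\Phi$ follows from the definition of $T$, and the theorem is proved; ruling out an exotic cycle that would violate this encoding is the crux.
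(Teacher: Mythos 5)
There is a genuine gap: your argument defers exactly the step that constitutes the proof. You correctly reduce to bounding the length of a stable cycle, and your bookkeeping (a placed $1$ at position $p$ survives exactly $p$ shifts; on a cycle the $0$-drops and $1$-drops balance, forcing even period; the lifetime identity $\sum p_i = k\bar N$) is sound but does not by itself exclude long cycles. The entire content of the theorem is then packed into the claim that some $\Phi$ conjugating $T$ to $+1$ on $\mathbb{Z}/2n\mathbb{Z}$ exists on every cycle, and you explicitly leave that unproved. Worse, the candidate you name cannot work as stated: if $\Phi(s)$ is the time elapsed since the most recent $0$-drop, then $\Phi$ resets at every step that drops a $0$, so $\Phi(T(s))=\Phi(s)+1$ fails precisely at those steps; and ``the position of the next placement'' need not advance by $1$ each step either (it jumps when a placement actually occurs). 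A smaller issue: placements need not land in $\{n+1,\dots,2n\}$ a priori --- seeds have $2n+2$ bits, and the greedy rule can in principle place as far right as position $2n+2$ --- so even your lifetime range needs justification.

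The paper closes this gap by a different and much more elementary route. It defines the \emph{energy} of a seed as the largest index, counted to the right of the $n$-bit incubator, at which a $1$ appears, and shows (i) energy never increases and must strictly decrease within finitely many steps while it is positive, so every trajectory reaches a \emph{simple} seed (energy $0$); and (ii) on simple seeds the transition rule degenerates to ``each digit becomes the complement of the digit $n$ steps earlier,'' so $T^{2n}$ is the identity there. Your rotation picture is essentially a description of the dynamics on simple seeds, so your intuition about the attractor is right; what you are missing is the argument that every cycle actually lies in that regime, i.e.\ that no ``exotic'' cycle with persistent $1$'s far to the right of the incubator exists. The energy monotonicity argument is exactly the tool that rules this out, and without it (or a substitute) your proposal is a plan rather than a proof.
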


\begin{defi}[Incubator]
    Elements of $D(1, n)$ (referred to as seeds) are strings of $2n+2$ bits. The leftmost $n$ bits are called the \textit{incubator}. The significance of this portion of the bit string is that, restricted to the incubator, $D(1, n)$'s iteration function is simply a left shift.
\end{defi}

\begin{defi}[Harvest]
    When a seed is iterated, the leftmost bit is dropped as a product of the left-shift. We say that this bit is \textit{harvested}.
\end{defi}

\begin{defi}[energy, simple seed]
    Index the entries in a seed $s \in  D(1, n)$ to the right of the incubator by $1, 2, 3, ...$. Define the \textit{energy} of $s$ to be the greatest index at which a 1 appears. If no 1's appear to the right of the incubator, $s$ has energy 0. Call a seed with 0 energy \textit{simple}.
\end{defi}

\begin{prop}
    Energy goes to 0 after finitely many iterations.
\end{prop}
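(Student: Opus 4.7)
The plan is to show that the energy $E_k$ is a weakly non-increasing potential on states of positive energy, and then use the boundedness of the total number of $1$'s in a seed to rule out $E_k$ stabilizing at a positive value.

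First I would analyze a single iteration in terms of $E_k$, splitting on whether the harvested (leftmost) bit is $1$ or $0$. After the left-shift, each new non-incubator position $i$ with $1 \leq i \leq n+1$ inherits the old position $i+1$, and new position $n+2$ is $0$. If the harvest was $1$, this shift is the entire iteration, so $E_{k+1} = \max(0, E_k - 1)$. If the harvest was $0$, a $1$ is additionally inserted at the leftmost $0$ in the new non-incubator; call that index $j$. The key observation is that when $E_k \geq 1$, old non-incubator position $E_k + 1$ is $0$ by maximality of $E_k$ (or immediately by the fill-in when $E_k = n+2$), so new position $E_k$ is $0$, forcing $j \leq E_k$. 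Hence $E_{k+1} = \max(E_k - 1, j) \leq E_k$.

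Combining, $E_{k+1} \leq E_k$ whenever $E_k \geq 1$, and the harvest-$1$ case strictly decreases $E_k$. Now suppose for contradiction that $E_k \geq 1$ for every $k$ past some $k_0$. Then $(E_k)_{k \geq k_0}$ is a non-increasing sequence of positive integers, so it stabilizes at some $E^* \geq 1$. Once stabilized, every iteration must have harvest $0$, for otherwise $E_{k+1} < E^*$. But in this situation, the total number of $1$'s in the seed strictly increases by $1$ at each iteration (a $0$ is harvested and one $1$ is inserted). Since the total is bounded by $2n+2$, this cannot continue indefinitely, giving the contradiction.

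The main obstacle is establishing the inequality $j \leq E_k$ in the harvest-$0$ case, which is exactly what makes energy a monotone potential on positive-energy states; everything else is bookkeeping. One subtlety worth flagging: the proposition does not claim $E_k$ remains $0$ once reached, since a simple seed whose incubator's leftmost bit is $0$ will harvest a $0$ and insert a $1$ at position $1$, raising energy to $1$. It only asserts that $E_k = 0$ is hit at some finite time, which is precisely what the argument above produces.
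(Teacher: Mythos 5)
Your proof is correct and its structure matches the paper's: establish that energy is non-increasing and strictly decreases whenever a 1 is harvested, then rule out stabilization at a positive level. The one substantive variation is the termination argument — the paper observes that each 1 inserted after a harvested 0 must itself eventually be harvested, contradicting the assumption that only 0's are harvested, whereas you bound the number of 1's by $2n+2$ and note that it would strictly increase every step; both are valid, with yours being a bit more explicit. Your monotonicity step is also somewhat more careful than the paper's: you track the actual order of operations (shift first, then insert) and handle the boundary case $E_k = n+2$ directly via the filled-in zero, whereas the paper's appeal to a 0 at index $g+1$ would fall out of range there.
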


\begin{proof}

It suffices to show that energy is non-increasing and that non-zero energy must decrease after finitely many successions.

Consider iterating some seed $s$ with energy $g > 0$. If a 0 is harvested, a single 1 is added, which will replace the leftmost 0 among those outside the incubator. There is a 0 at index $g+1$, so this addition happens at some index $i \leq g+1$. Then a leftward shift occurs, decreasing the new 1's maximal index to $g$. Thus, the energy level is at most maintained. If a 1 is harvested, no new 1's are added, and the farthest-right 1 shifts leftward, so energy decreases by 1.

For energy to not decrease, only 0's may be harvested. However, whenever a 0 is harvested, a 1 is added, and that 1 will later be harvested. Thus, non-zero energy cannot forever be maintained. This completes the proof.

\end{proof}

\begin{prop}
    Simple seeds repeat themselves in $2n$ iterations.
\end{prop}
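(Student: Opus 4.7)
My plan is to exhibit an explicit map $T : \{0,1\}^n \to \{0,1\}^n$ describing the iteration restricted to simple seeds, and then compute directly that $T^{2n}$ is the identity.

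The first step is to verify that the iteration preserves the set of simple seeds. For $s = (b_1, b_2, \ldots, b_n, 0, 0, \ldots, 0)$, the left shift carries the trailing zeros one place to the left and inserts a fresh 0 on the right, so every entry outside the incubator is still 0 immediately after the shift. The only other modification occurs when $b_1 = 0$, in which case a 1 is placed at the leftmost 0 outside the incubator; by the previous sentence that is position $n+1$, which the shift carries into position $n$, inside the incubator. Hence the new seed is again simple, and the iteration descends to a well-defined map $T$ on the $n$-bit incubator. Reading off the two cases $b_1 = 0$ and $b_1 = 1$ gives the clean formula
\[
T(b_1, b_2, \ldots, b_n) = (b_2, b_3, \ldots, b_n, \overline{b_1}),
\]
i.e., the usual left shift with the complement of the dropped bit appended on the right.

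The second step is to iterate this formula. Each application of $T$ drops the leftmost bit and appends its complement on the right. After $n$ applications, every original bit has been dropped once and has reappeared on the right in complemented form:
\[
T^n(b_1, b_2, \ldots, b_n) = (\overline{b_1}, \overline{b_2}, \ldots, \overline{b_n}).
\]
A short induction on $n$ makes this precise. Because bitwise complementation is an involution, $T^{2n} = (T^n)^2 = \mathrm{id}$, and so every simple seed returns to itself after $2n$ iterations.

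The only step with any real content is the first: verifying that the simple-seed property is preserved under a single iteration. This is essentially a careful bookkeeping argument about where the potentially inserted 1 lands relative to the shift. Once this is pinned down, the formula for $T$ is immediate and the remaining computation of $T^{2n} = \mathrm{id}$ is mechanical. As a bonus, the argument yields the stronger statement that every simple seed's orbit length \emph{divides} $2n$, which is exactly what the Post-band Period Theorem asserts for these states.
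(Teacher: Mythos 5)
Your proof is correct and follows essentially the same route as the paper's: both arguments reduce the iteration on simple seeds to the complemented shift $(b_1,\dots,b_n)\mapsto(b_2,\dots,b_n,\overline{b_1})$ on the incubator, observe that $n$ applications yield bitwise complementation, and conclude $T^{2n}=\mathrm{id}$. Your explicit check that the set of simple seeds is closed under iteration, and your naming of the map $T$, merely make transparent a step the paper leaves implicit.
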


\begin{proof}

Notice that for simple seeds, iteration rules can be simplified to the following:

\begin{itemize}
    \item If the leftmost digit is a 0, replace the $(n+1)$th digit with a 1 and shift left.
    \item If the leftmost digit is a 1, shift left.
\end{itemize}

Essentially, each digit is the opposite of what it was $n$ iterations prior. So, after $2n$ iterations, a simple seed repeats itself.

\end{proof}

The post-band period theorem follows as an immediate corollary of the two propositions.

\subsection{$D(k, n)$ and related conjectures}

The system $D(1, n)$ can be generalized to encode generation seeds for the next $k$ numbers after a band. I.e., if 0 to $2n$ form a band, then the distribution of the numbers $2n+1$ to $2n+k$ are determined by the seeds which act as states for the dynamical system $D(k, n)$, described hereunder. In this generalization, we have $k$ binary sequences stacked in a $k \times (2n+k)$ array of 1's and 0's. As a convention, let binary sequences positioned lower in the array correspond to the positions of smaller entries, and higher-positioned binary sequences correspond to greater entries.

The transition function for $D(k, n)$ can be derived in a similar way to the derivation of $D(1, n)$. The results can be summarized in the following iteration rule:
\begin{enumerate}
    \item Shift left, filling in the rightmost column of the array with 0's.
    \item We are going to now replace a 0 with a 1 in every row from which a 0 was just harvested (think of this as adding the complement of the set we just removed).
    \item We add the 1's one at a time, starting from the bottom and moving upward, according to the following rules: \newline
    a) 1's cannot be added to the leftmost $n$ columns (this corresponds to the band of smaller entries blocking) \newline
    b) Multiple 1's cannot be added to the same column in a single transition \newline
    c) 1's must replace the leftmost possible 0 which adheres to the above two conditions
\end{enumerate}

\begin{example}

Below is a sequence of blocks detailing an example iteration in $D(8, 4)$. The first block is the initial state. The middle shows the block after the shift, and identifies in red which columns need 1's added. The bottom block is the iterated seed, with the added 1's highlighted in red. The dashed line marks the right boundary of the incubator. To the left of the dashed line, no 1's can be added. The incubator contains 4 columns because $n=4$.

\begin{figure}[!ht]
    \centering
    
    \begin{tikzpicture}
        \matrix (start) at (0, 0) [matrix of nodes] 
        {
            1 & 1 & 1 & 1 & 0 & 0 & 0 & 0 & 0 & 0 & 0 \\
            0 & 0 & 0 & 0 & 0 & 0 & 0 & 0 & 0 & 0 & 0 \\
            0 & 0 & 0 & 1 & 1 & 1 & 1 & 0 & 0 & 0 & 0 \\
            1 & 1 & 1 & 1 & 1 & 0 & 0 & 0 & 0 & 0 & 0 \\
            0 & 1 & 0 & 1 & 0 & 1 & 0 & 0 & 0 & 0 & 0 \\
            1 & 0 & 1 & 0 & 0 & 0 & 0 & 0 & 0 & 0 & 0 \\
            0 & 1 & 0 & 0 & 1 & 1 & 0 & 0 & 0 & 0 & 0 \\
            1 & 0 & 0 & 0 & 0 & 0 & 0 & 0 & 0 & 0 & 0 \\
        };

        \matrix (shifted) at (0.4, -4) [matrix of nodes] 
        {
            1 & 1 & 1 & 0 & 0 & 0 & 0 & 0 & 0 & 0 & 0 \\
            0 & 0 & 0 & 0 & 0 & 0 & 0 & 0 & 0 & 0 & 0 \\
            0 & 0 & 1 & 1 & 1 & 1 & 0 & 0 & 0 & 0 & 0 \\
            1 & 1 & 1 & 1 & 0 & 0 & 0 & 0 & 0 & 0 & 0 \\
            1 & 0 & 1 & 0 & 1 & 0 & 0 & 0 & 0 & 0 & 0 \\
            0 & 1 & 0 & 0 & 0 & 0 & 0 & 0 & 0 & 0 & 0 \\
            1 & 0 & 0 & 1 & 1 & 0 & 0 & 0 & 0 & 0 & 0 \\
            0 & 0 & 0 & 0 & 0 & 0 & 0 & 0 & 0 & 0 & 0 \\
        };

        \matrix (bumped) at (-2.6, -4) [matrix of nodes] {
            1 \\
            0 \\
            0 \\
            1 \\
            0 \\
            1 \\
            0 \\
            1 \\
        };

        \matrix (toadd) at (-3.6, -4) [matrix of nodes, color=red] {
            0 \\
            1 \\
            1 \\
            0 \\
            1 \\
            0 \\
            1 \\
            0 \\
        };

        \draw[->] (bumped) -- (toadd);

        \matrix (shifted) at (0.4, -8) [matrix of nodes]
        {
            1 & 1 & 1 & 0 & 0 & 0 & 0 & 0 & 0 & 0 & 0 \\
            0 & 0 & 0 & 0 & |[color=red]| 1 & 0 & 0 & 0 & 0 & 0 & 0 \\
            0 & 0 & 1 & 1 & 1 & 1 & 0 & |[color=red]| 1 & 0 & 0 & 0 \\
            1 & 1 & 1 & 1 & 0 & 0 & 0 & 0 & 0 & 0 & 0 \\
            1 & 0 & 1 & 0 & 1 & 0 & |[color=red]| 1 & 0 & 0 & 0 & 0 \\
            0 & 1 & 0 & 0 & 0 & 0 & 0 & 0 & 0 & 0 & 0 \\
            1 & 0 & 0 & 1 & 1 & |[color=red]| 1 & 0 & 0 & 0 & 0 & 0 \\
            0 & 0 & 0 & 0 & 0 & 0 & 0 & 0 & 0 & 0 & 0 \\
        };
        \draw[dashed, thick] (-0.2, -6.1) -- (-0.2, -10);
        
    \end{tikzpicture}

    \caption{An example iteration of $D(8, 4)$.}
    \label{fig:enter-label}
\end{figure}

\FloatBarrier
    
\end{example}

I conjecture that if the numbers 0 to $8n-2$ form a band, so do the numbers 0 to $8n+6$. Stated formally:

\begin{conjecture}[Band Induction]
    Fix $c \in \mathbb{N}$. If there exists some $N, k$ such that for all $a, b > N$, $|a-b| \leq 4k-1 \iff C_c(a, b) \leq 8k-2$, then there exists an $N'$ such that for all $a, b > N'$, $|a-b| \leq 4k+3 \iff C_c(a, b) \leq 8k+6$.
\end{conjecture}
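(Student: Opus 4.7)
The plan is to analyze the dynamical system $D(8, 4k)$ introduced in Section 9, which precisely encodes the placement of the next eight values after a band of radius $4k$. Under the hypothesis, for rows beyond some threshold the values $0, \ldots, 8k-2$ occupy exactly the band $|a-b| \leq 4k-1$, so from that point the placements of $8k-1, \ldots, 8k+6$ are governed by $D(8, 4k)$. Since this system has finite state space it is eventually periodic; the desired $N'$ will exist provided that every recurrent orbit of $D(8, 4k)$ produces placements that fill precisely the cells with $|a-b| \in \{4k, 4k+1, 4k+2, 4k+3\}$ in every sufficiently late row.

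First I would generalize the energy argument of Proposition 8.3. Define the energy of a state to be the maximum, taken over the eight rows, of the largest column index outside the incubator at which that row has a $1$. A row-wise adaptation of the $D(1,n)$ argument should show the energy is non-increasing along any orbit and hence eventually constant. Next, I would attempt the stronger claim that every recurrent state has all of its 1's located in columns $n+1$ through $n+8$, where $n=4k$, so each step places new 1's only in the first eight cells just outside the existing band. Finally, combining this with the $\mathcal{P}$-completion lemma (each row of the $\mathcal{P}$-completion array contains every non-negative integer exactly once) and the leaf symmetry $C_c(a,b)=C_c(b,a)$, the eight new values are forced into the eight available slots with $|a-b| \in \{4k, 4k+1, 4k+2, 4k+3\}$, producing the claimed band extension together with an explicit $N'$.

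The main obstacle is the characterization of recurrent orbits. For $D(1,n)$, Proposition 8.3 collapses them to energy-zero simple seeds, after which the iteration rule is trivial. In $D(8, 4k)$, however, the coupling between rows caused by the column-uniqueness rule and the bottom-up placement order permits a priori non-simple configurations that redistribute an excess 1 among different rows across successive iterations, so plain energy monotonicity is too weak. I would first try a lexicographic potential combining energy with the multiset of 1-column positions, aiming to force strict decrease whenever some row has a 1 outside the target window $\{n+1, \ldots, n+8\}$. If that resists, I would look for a stronger invariant tied specifically to the value $8$ in the first coordinate of $D(8, 4k)$, or try to decompose the analysis into eight coupled copies of $D(1, 4k+j)$ for $j = 0, \ldots, 7$, applying the Post-band Period Theorem layer by layer. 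Computer-assisted enumeration of recurrent orbits for small $k$ is a natural starting point for identifying the correct invariant.
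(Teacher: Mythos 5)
The statement you are trying to prove is labeled a \emph{conjecture} in the paper, and the paper does not prove it. What the paper does do (Proposition 9.1 together with Remark 9.1) is show that the Band Induction conjecture is \emph{equivalent} to the assertion that every periodic orbit of $D(8,4k)$ consists entirely of stable seeds, i.e.\ that recurrent states place their new 1's in the four columns immediately outside the incubator at every step, so that the derived sequence is $4,4,\dots,4,3,2,1,0,\dots$. Your proposal reconstructs exactly this reduction --- hypothesis implies the placements of $8k-1,\dots,8k+6$ are governed by $D(8,4k)$; finiteness of the state space gives eventual periodicity; stability of the recurrent orbits would force the eight new values into the eight cells with $|a-b|\in\{4k,\dots,4k+3\}$ and hence extend the band --- and then stalls at precisely the point where the paper stalls.

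The genuine gap is therefore the central claim itself: that every recurrent state of $D(8,4k)$ has all of its 1's confined to columns $n+1$ through $n+8$ (equivalently, that all periodic orbits are stable). You state this as a step of the plan but never establish it, and you correctly diagnose why the obvious route fails: the energy monotonicity argument of Proposition 8.3 for $D(1,n)$ does not transfer, because the column-uniqueness rule and the bottom-up placement order couple the eight rows, so an excess 1 in one row can be displaced into another rather than being annihilated. None of your candidate repairs (a lexicographic potential, a decomposition into eight coupled copies of $D(1,4k+j)$, computer search for invariants) is carried out, and the paper itself offers only computational evidence for the analogous statement about $D(3,n)$ for $n\le 9$. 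As it stands your write-up is a correct restatement of the conjecture in the dynamical-systems language of Section 9, not a proof of it.
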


From a state of a dynamical system $D(k, n)$, one can obtain a sequence by counting the number of 1's in each column. The start seed in figure 14 would have corresponding sequence $4, 4, 3, 4, 3, 3, 1, 0, ...$ and the iterated one has sequence $4, 3, 4, 3, 4, 2, 1, 1, 0, ...$. Call this a state's \textit{derived sequence}.

\begin{defi}(Stability)
    Call a seed $s \in D(2k, n)$ \textit{stable} if its derived sequence goes $k, k, ..., k, k-1, k-2, ..., 2, 1, 0, 0, ...$ with $n$ $k$'s appearing.
\end{defi}

\begin{remark}
    An entry in the derived sequence is the number of appearances of the numbers $a \in [2n, 2n+2k)$ to the left of the band in a particular row. If the numbers from 0 to $2n-1$ form a band and the numbers from $2n+2k-1$ do as well, then the numbers from $2n$ to $2n+2k-1$ must be evenly distributed between the left and right sides of the smaller band. Thus, the band induction conjecture implies that for any natural number $n$, any periodic orbit of $D(8, 4n)$ consists entirely of stable seeds. The converse is in fact also true.
\end{remark}

\begin{prop}
    If every periodic orbit of $D(8, 4n)$ consists entirely of stable seeds, then the band induction conjecture holds.
\end{prop}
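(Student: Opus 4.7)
The plan is to run the direction sketched in Remark 9.2 backwards, reducing the band-induction conjecture to the dynamical hypothesis. I fix a center $c$ and assume $N, k$ satisfy the hypothesis of the conjecture: beyond row $N$, the numbers $0$ through $8k-2$ form a band of radius $4k$. Mimicking the barrier construction of subsection 9.1 and generalizing it to the eight new numbers $8k-1,\, 8k,\,\ldots,\,8k+6$, I build a sequence of barriers $B^*_0, B^*_1, \ldots$, each shifted one unit diagonally from the previous and each based far enough inside the radius-$4k$ band region. The shadow cast on $B^*_i$ by these eight new numbers defines a seed in $D(8, 4k)$, and the transition from seed $i$ to seed $i+1$ is exactly one iteration of the system, a verbatim generalization of the $D(1, n)$ derivation.

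Since $D(8, 4k)$ has finite state space, the trajectory of seeds must enter a periodic orbit after at most some bounded number of steps $T$. For barriers beyond $N + T$, every seed lies on this periodic orbit, and by the hypothesis every such seed is stable, meaning its derived sequence is $4, 4, \ldots, 4, 3, 2, 1, 0, 0, \ldots$ with exactly $4k$ fours.

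I then translate this stability condition into the band extension. By the interpretation recorded in Remark 9.2, the derived sequence counts, in each row meeting the barrier, the number of the eight new entries appearing to the left of the old band; stability therefore says the first $4k$ such rows each contain exactly four new entries on the left, and by the array's symmetry four on the right. Combining the $\mathcal{P}$-completion inequality (which caps how far off-diagonal any $\mathcal{P}$ position can sit) with the $D(8, 4k)$ iteration rule (which places every new $1$ in the leftmost available column outside the incubator), these four entries per side are forced into the four columns immediately flanking the old band. Hence for all $a, b > N' := N + T + O(k)$ the numbers $0$ through $8k + 6$ form a band of radius $4k + 4$, exactly as the conjecture demands.

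The hard part will be the final translation: turning the count-based stability condition into the geometric statement that each new number sits in a specific column adjacent to the old band. I expect this to require careful combinatorial bookkeeping along a full period of the orbit, tracking both the columns in which ones are added and the rows receiving them, in order to rule out scattered placements that agree with the same column sums. Remark 9.2 asserts the forward correspondence band-induction $\Rightarrow$ stability; the bulk of the converse work lies in using the iteration rule as the constraint that promotes stability of column sums to stability of actual positions.
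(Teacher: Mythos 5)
Your overall approach matches the paper's: encode the post-band numbers as a trajectory in $D(8, 4k)$, observe that the trajectory eventually enters a periodic orbit, apply the hypothesis that every seed in that orbit is stable, and translate stability back into the geometric band statement. However, you leave the crucial final step as a flagged "hard part," and the route you sketch for it is both incomplete and more complicated than what the paper actually does.

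The gap is in the translation from stability of column sums to placement of the new entries. You propose deducing the placement from the $\mathcal{P}$-completion inequality together with the greedy (leftmost-available) iteration rule, and you anticipate needing bookkeeping along a full period of the orbit to rule out "scattered placements agreeing with the same column sums." Neither of these is the right mechanism. The paper's argument is local, using only two consecutive stable seeds $s_k$ and $s_{k+1}$. Left-shifting a stable derived sequence yields a sequence whose tail $3, 2, 1, 0$ sits one column earlier than it should; for $s_{k+1}$ to again be stable, the four $1$'s added during iteration must raise the column sums in exactly the four columns immediately past the incubator, each by one. Since the iteration rule forbids two added $1$'s from sharing a column, this pins down the four columns uniquely — no global bookkeeping, and no appeal to the $\mathcal{P}$-completion inequality beyond its implicit role in bounding the seed width. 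The worry about which rows receive the $1$'s is also beside the point: the band conclusion only requires that the four off-diagonal positions flanking the old band each contain one accounted-for number; it is indifferent to which of the eight numbers lands where, and the reflective symmetry of the array supplies the matching four on the other side. So the key idea you are missing is that \emph{consecutive} stability, not the placement rule by itself, forces the columns, and once that is noted the translation is immediate rather than a bookkeeping exercise.
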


\begin{proof}

Suppose a tripod nim array has a band with radius $4n$. Then, the positions of the numbers $8n-1$ to $8n+6$ are described by a trajectory of the system $D(8, 4n)$. We say the integers $8n-1, 8n, ..., 8n+6$ are \textit{accounted for} by $D(8, 4n)$. As stated in remark 9.1, the values in the derived sequence of a seed corresponding to this array count the number of accounted-for numbers to the left of the band in a certain row.

Let $s_k$ and $s_{k+1}$ be consecutive stable seeds corresponding to consecutive barriers $B^*_k$ and $B^*_{k+1}$ in the tripod nim array. To obtain the derived sequence of $s_{k+1}$ from $s_k$, we can first left-shift $s_k$'s derived sequence and then add 1 to each of the entries corresponding to the columns to which we add 1's. Because $s_{k+1}$ is also a stable seed, the transition from $s_k$ to $s_{k+1}$ must involve the addition of 1's to the first 4 columns after the incubator.

This corresponds the presence of four accounted-for numbers in the four spots immediately to the right of the band in the row between the barriers $B^*_k$ and $B^*_{k+1}$. Thus, for a sequence of entirely stable seeds, the 8 numbers accounted for by the seed help form the right half of a perfect band of radius $4n+4$. They also then form a perfect left half by the symmetry of the array.

\end{proof}

The last pattern pointed out in section 7 was that we often see rows $8n$ and $8n+1$ with period $2(4n)(4n+1)$ or rows $8n+2$ and $8n+3$ with period $2(4n+1)(4n+2)$. This behavior would be explained by the following conjecture, if proven.

\begin{conjecture}
    Every periodic orbit of $D(3, n)$ has a period which is a divisor of $2(4n)(4n+1)$.
\end{conjecture}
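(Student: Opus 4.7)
The plan is to emulate the two-stage proof of the Post-Band Period Theorem: an energy-decay argument confining every trajectory to a structured invariant subset, followed by a direct period computation on that subset. First I would define an energy functional $E: D(3,n) \to \mathbb{Z}_{\geq 0}$, for instance the largest index (measured from the right end of the incubator) at which any of the three rows holds a 1. Following the templates of the two propositions of Section 9.2, I would show that $E$ is non-increasing under the iteration map via case analysis on how many of the three rows harvest a 1 versus a 0; the delicate case is when one or more rows harvest a 0 so that new 1's must be added, but rule 3(c) and the bottom-up priority together force these additions sufficiently leftward that $E$ does not grow. Then I would show that if $E$ exceeds a small threshold it must strictly decrease within finitely many iterations, by the analogue of ``each planted 1 must eventually be harvested''. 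The upshot is an invariant ``stable'' set $S^\star$ that every trajectory eventually enters.

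The second step is to classify the elements of $S^\star$ and simplify the restricted dynamics. Heuristically, and consistent with the preceding notion of stable seed, low-energy seeds should have at each non-incubator column either a single 1 in one of the three rows or no 1 at all. Such a seed is encoded by the three incubator words of length $n$ together with a labeling $\sigma : \{1, \ldots, n+3\} \to \{0, 1, 2, 3\}$ recording which row, if any, owns the 1 in each post-incubator column. On $S^\star$ the iteration rule simplifies considerably: the incubator bits and $\sigma$ each shift left by one, and the rows that harvested a 0 receive new 1's at the leftmost vacant columns of $\sigma$, constrained to be distinct columns and ordered by the bottom-up priority.

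The factor of $2$ in $2(4n)(4n+1)$ is essentially free: each transition changes the total number of 1's by an odd quantity $3 - 2k$ with $k \in \{0, 1, 2, 3\}$, forcing any periodic orbit to have even period. The factor of $4n$ should arise from a shift-cycle of length $2n$ acting at an appropriate doubling on the incubator bits (each bit equalling the complement of its value a fixed number of steps earlier, in the spirit of the simple-seed analysis of $D(1,n)$). The principal obstacle---and where I expect the real work to lie---is isolating where the factor of $4n+1$ enters. In $D(1,n)$ the stable dynamics decouple into independent bit-flip cycles of length $2n$; in $D(3,n)$ the three rows are braided together by the ``no two 1's in one column per transition'' rule together with the bottom-up priority, and the combinatorial order of this interaction is not transparent. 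I would attempt to identify an explicit invariant of $\sigma$---for instance a weighted row-count of the form $\sum_i i \cdot [\sigma(i) = j] \pmod{4n+1}$ for each $j \in \{1, 2, 3\}$---and show that it evolves by a rotation of order $4n+1$. Direct enumeration of all stable orbits in $D(3,1)$ and $D(3,2)$ would be invaluable both for testing candidate invariants and for verifying the bound $2(4n)(4n+1)$ numerically before committing to a structural proof.
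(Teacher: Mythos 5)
This statement is labeled a \emph{conjecture} in the paper, and the paper provides no proof of it; the only evidence offered is the remark immediately following, which reports computational verification of the orbit structure for $n \leq 9$. There is therefore no ``paper's own proof'' to compare your proposal against, and you should be aware that you are attempting an open problem rather than reconstructing a known argument.

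Taken on its own terms, your sketch is a sensible research program but not a proof, and you are honest about where the hole is. The parity argument for the factor of $2$ is sound: the total number of $1$'s changes by $3 - 2k$ per step, which is always odd, so any closed orbit has even length. But the remaining factor $4n(4n+1)$ is not established. Two specific concerns: first, your proposed classification of the low-energy invariant set (``at most one $1$ per non-incubator column'') is an untested guess, and it is not the pattern suggested by the paper's own notion of a stable seed (for $D(2k,n)$, a stable seed carries a decreasing staircase of column-sums $k, k, \ldots, k, k{-}1, \ldots, 1, 0$, not a $0$--$1$ profile); since $3$ is odd, the analogue for $D(3,n)$ does not obviously reduce to your form and needs to be derived rather than assumed. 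Second, your account of the $4n$ factor (``a shift-cycle of length $2n$ acting at an appropriate doubling'') is heuristic, and the $4n+1$ factor is, as you say yourself, entirely open. Because the three rows interact through the no-collision and bottom-up rules, the dynamics on the reduced set will not decouple into independent $D(1,n)$-type bit cycles, and you do not yet have a candidate invariant that exhibits the order-$(4n+1)$ rotation. Before investing in a structural argument, it would be worth doing the small-$n$ enumeration you mention -- both to confirm the shape of the invariant set and to test whether the period really factors the way your plan requires.
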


\begin{remark}
    For small values of $n$, one can calculate all the orbits of $D(3, n)$ and see that the conjecture holds for them. I have confirmed the conjecture for $n\leq 9$.
\end{remark}

\section*{Acknowledgments}

This research was made possible by a Brown University SPRINT/UTRA grant. In addition to the SPRINT/UTRA program, I would like to thank my research mentor Prof. Richard Schwartz for his guidance and many helpful conversations.

\bibliographystyle{plain}
\bibliography{sources.bib}

\end{document}